\journal{Computers and Mathematics with Applications}
\newcommand{\vq}{ {\vec q } }
\newcommand{\vv}{ {\vec v } }
\newcommand{\vn}{ {\vec n } }
\newtheorem{ex}{Example}[subsection]
\newtheorem{theorem}{Theorem}[section]
\newtheorem{remark}{Remark}
\newtheorem{lemma}{Lemma}[section]
\begin{document}

\begin{frontmatter}

\title{Weak Galerkin finite element method for second order problems on curvilinear polytopal meshes with Lipschitz continuous edges or faces  }

\author{Qingguang Guan}
\address{School of Mathematics and Natural Sciences,
	University of Southern Mississippi,
	Hattiesburg, MS 39406, USA}
\ead{qingguang.guan@usm.edu}
\author{Gillian Queisser}
\address{Department of Mathematics, Temple University, Philadelphia, PA 19122, USA}
\ead{gillian.queisser@temple.edu}

\author{Wenju Zhao\corref{mycorrespondingauthor}}
\address{School of Mathematics,
	Shandong University,
	Jinan, Shandong 250100, China}
\ead{zhaowj@sdu.edu.cn}
\cortext[mycorrespondingauthor]{Corresponding author}

\begin{abstract}
		In this paper, we propose new basis functions defined on curved sides or faces of curvilinear elements (polygons or polyhedrons with curved sides or faces) for the weak Galerkin finite element method. Those basis functions are constructed by collecting linearly independent traces of polynomials on the curved sides/faces. We then analyze the modified weak Galerkin method for the elliptic equation and the interface problem on curvilinear polytopal meshes with Lipschitz continuous edges or faces. The method is designed to deal with less smooth complex boundaries or interfaces. Optimal convergence rates for $H^1$ and $L^2$ errors are obtained, and arbitrary high orders can be achieved for sufficiently smooth solutions. The numerical algorithm is discussed and tests are provided to verify theoretical findings. 
\end{abstract}

\begin{keyword}
		Lipschitz Continuous Boundaries or Interfaces; Curvilinear Elements; Weak Galerkin Method; Traces of Polynomials; High Orders;  Second-Order PDEs
\MSC[2010] 
65N30, 65N12, 35J25
\end{keyword}
\end{frontmatter}


\section{Introduction}\label{introd}
It has been a challenging question: how to design high-order numerical methods solving {  Partial Differential Equations (PDEs)} on domains (2D or 3D) with less smooth curved boundaries or interfaces if the solutions are sufficiently smooth? We refer \cite{guyomarc2009discontinuous,guo2020immersed,guo2021solving,cockburn2014priori,he2023error} for high-order methods with piecewise $C^2$ boundaries or interfaces, and \cite{09iso,da2019virtual} for high-order methods relying on the smoothness of the boundaries/interfaces. Many numerical methods fall within those two categories. Our approach based on the weak Galerkin finite element method only needs boundaries or interfaces to be Lipschitz continuous, and the convergence rates depend on the solution, not the geometry. The weak Galerkin method \cite{wang2013weak,Wang14,Lin15,Lin16} among others (e.g., HDG \cite{cockburn2010projection,cockburn2014priori}, nonconforming VEM \cite{de2016nonconforming,dedner2022robust}, HHO\cite{solano2019high,burman2021unfitted}) as a new variation of Galerkin methods has been applied to various partial differential equations \cite{Wang14,mu2021wgcv,guan2018w,guan2020w}. The main advantages of those methods lie in using discontinuous basis functions inside the element and on its boundaries, employing general shape elements, ensuring mass conservation, and producing continuous numerical fluxes.  HDG, HHO, VEM, and other methods with curved elements dealing with elliptic interface problems and Dirichlet boundary value problems can be found in \cite{cockburn2010projection,cockburn2014priori,solano2019high,burman2021unfitted,da2019virtual,bertoluzza2022weakly} and references therein. The method proposed here is significantly different from them, it uses fitted meshes, and there is no need to map to reference elements and redefine basis functions. 
An early version of this paper in 2019, without numerical experiments, can be found on {\bf arXiv}, see \cite{guan2019w}. {  Similar idea was also introduced in the Virtual Element context for 2D problems, see \cite{beirao2020polynomial}}. Later development by other researchers includes \cite{mu2021wgcv,li2022curved,yang2022weak,yemm2022new}, but our method is still unique with relaxed requirements for regularities of boundaries/interfaces {  and uniform treatment in 2D and 3D}.
The key idea of the weak Galerkin finite element method is the definition of ``weak gradient''. Suppose we have a curvilinear polygonal or polyhedral domain $D \subset \mathbb{R}^n, (n=2,3)$ with interior part $D_0$ and $C^{0,1}$ boundary $\partial D$, see Figure \ref{fg1} for a 2D example. A discontinuous function $v=(v_0,v_b)$ on $D$ is defined as: $v_0\in L^2(D_0)$, $v_b\in L^2(\partial D)$.  Then we denote $W(D)$ as the space of those discontinuous functions
\begin{equation}\label{wd}
	W(D) = \left\{v = (v_0,v_b): v_0\in L^2(D_0), v_b\in L^2(\partial D)\right\}.
\end{equation}
For any $v\in W(D)$, the ``weak gradient" of $v$ is a linear functional
in the dual space of $[H^1(D)]^n$, it's defined by $\nabla_{w}v$ in \eqref{wg1}
\begin{equation}\label{wg1}
	(\nabla_{w}v,\vq\ )_D := -\int_D v_0\nabla\cdot\vq\ {\rm d}x
	+
	\int_{\partial D} v_b \vq\cdot\vn\ {\rm d}S,
	\quad \forall \vq\in [H^1(D)]^n,
\end{equation}
where $\vn$ is the outward normal vector to $\partial D.$ 
The discrete ``weak gradient" $\nabla_{w,k,D} v$ of $v\in W(D)$ is defined as the solution of equation \eqref{wg2}
\begin{equation}\label{wg2}
	(\nabla_{w,k,D}v,\vq\ )_D = -\int_D v_0\nabla\cdot\vq\ {\rm d}x
	+
	\int_{\partial D} v_b \vq\cdot\vn\ {\rm d}S,
	\quad \forall \vq\in [\mathbb{P}_k(D)]^n,
\end{equation} 
where  $\nabla_{w,k,D} v \in [\mathbb{P}_k(D)]^n$, $\mathbb{P}_k(D)$ is the polynomial space with degree no more than $k$, {  where} $k$ is a non-negtive integer.  For brevity, in following sections, we use $\nabla_{w}$ to represent $\nabla_{w,k,D}$.
\begin{figure}[H]
	\begin{center}
		\begin{tikzpicture}[scale = 1.2]
			\coordinate (A) at (0,0);
			\coordinate (B) at (3,0);
			\coordinate (C) at (3.5,1.5);
			\coordinate (D) at (2,3);
			\coordinate (E) at (0,1);
			\draw plot [smooth, tension=0.8] coordinates { (D) (1.1,1.7) (E)};
			\coordinate (O) at ($1/5*(A)+1/5*(B)+1/5*(C)+1/5*(D)+1/5*(E)$);
			\draw (A)
			--(B)
			--(C)
			--(D);
			\draw (E)--(A);
			\draw[style=dashed](O) circle (0.7);
			\fill [black] (O) circle (1pt);
			\draw[style=dashed](O)--(A);
			\draw[style=dashed](O)--(B);
			\draw[style=dashed](O)--(C);
			\draw[style=dashed](O)--(D);
			\draw[style=dashed](O)--(E);
		\end{tikzpicture}
	\end{center}\caption{A star-shaped curvilinear element $D$ with a $C^{0,1}$ curved side}\label{fg1}
\end{figure}
The paper is structured as follows. In Section \ref{shape-regular}, the shape-regular assumptions for curvilinear elements are given, and new $L^2$ projections on curved sides/faces are defined. Based on the assumptions and projections,  necessary lemmas are proved. Then the weak Galerkin finite element scheme and new basis functions are proposed. Section \ref{poisson} is devoted to {  solving} Poisson's equation. The newly defined basis functions are used to solve the equation on curvilinear polytopal meshes with Lipschitz continuous edges or faces, {  where} optimal convergence rates in $L^2$ and $H^1$ norms are proved. 
{  In Section \ref{interface}, we apply the proposed weak Galerkin method to solve the elliptic interface problem, which involves Lipschitz continuous curved interfaces/boundaries. We also provide an error analysis that demonstrates optimal convergence rates.}
Section \ref{tests} shows the numerical results in 2D. Problems in 3D can be solved similarly but not presented here. Poisson's equation with the Neumann boundary condition on the curved domain is also considered. The elliptic interface problem with the non-homogeneous jump condition on the interface is tested. For convenience, Lipschitz continuous curved sides on an element are constructed by connecting short lines, but the basis functions are defined on the whole curve. The convergence results for $P_1$ and $P_2$ elements are all optimal. Conclusions are drawn in Section \ref{con}.
\section{Shape regularity}\label{shape-regular}
The shape regular assumptions are similar to \cite{Brenner17-2,guan2020w}. 
Assume

\vspace{0.15 cm}
\begin{description}
	\item[(A1)] {\em $D$ is a $C^{0,1}$ curvilinear polygonal/polyhedral domain with diameter $h_D$,}
	\item[(A2)] {\em $D$ is star-shaped with respect to a disc/ball $\mathfrak{B}_D\subset D$ with radius $\rho_Dh_D,\ 0<\rho_D<\frac12$, }
	\item[(A3)] {\em $\rho_D$ has a uniform lower bound $0<\rho_{\rm min}<\rho_D$.}
\end{description}
\vspace{0.15 cm}
Figure \ref{fg1} is a $C^{0,1}$ curvilinear polygonal domain, which has a Lipschitz continuous curved side.  
Let $A \apprle B$ denote $A \leq (constant)B$.
$D$ is shape-regular if it satisfies {\bf (A1)}-{\bf (A3)}. The following Lemmas in Section \ref{shape-regular} are valid on such $D$, and the hidden constants only depend on $\rho_D$ and degrees of employed polynomials.
\begin{lemma}\label{bramble}(Bramble-Hilbert Estimates) \cite{Bramble70}. $D$ is shape-regular implies that:
	\begin{equation*}
		\inf\limits_{q\in\mathbb{P}_l(D)} |\xi - q|_{H^m(D)} \apprle h^{l+1-m}
		|\xi|_{ H^{l+1}(D)}, \ 
		\forall \xi\in H^{l+1}(D),\ l = 0,\cdots, k,\ and\ 0\leq m \leq l.
	\end{equation*}
\end{lemma}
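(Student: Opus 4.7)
My plan is to prove this classical Bramble-Hilbert estimate via the Sobolev averaged Taylor polynomial (in the spirit of Dupont-Scott), using only the star-shape property \eqref{assume1}-\eqref{assume2} so that all hidden constants can be traced to $\rho_D$. The key geometric ingredient is that, by star-shapedness of $D$ with respect to $\mathfrak{B}_D$, for every $x\in D$ and every $y\in\mathfrak{B}_D$ the entire segment $\overline{xy}$ lies in $D$; this is precisely what allows Taylor's theorem with integral remainder to be averaged over the ball to produce a polynomial approximant with a controllable error.

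First I would fix a non-negative cutoff $\phi\in C_c^\infty(\mathfrak{B}_D)$ with $\int\phi=1$ and $\|\phi\|_{L^\infty}\apprle(\rho_D h_D)^{-n}$, and define the averaged Taylor polynomial
\begin{equation*}
Q^l\xi(x) = \int_{\mathfrak{B}_D}\phi(y)\sum_{|\alpha|\le l}\frac{(x-y)^\alpha}{\alpha!}D^\alpha\xi(y)\,{\rm d}y.
\end{equation*}
Then $Q^l\xi\in\mathbb{P}_l(D)$ and the operator reproduces polynomials up to degree $l$. Substituting the integral Taylor remainder along $\overline{xy}$ into $\xi-Q^l\xi$ and interchanging integrations produces a kernel representation
\begin{equation*}
(\xi-Q^l\xi)(x) = \int_D K(x,z)\,D^{l+1}\xi(z)\,{\rm d}z,\qquad |K(x,z)|\apprle |x-z|^{-(n-l-1)},
\end{equation*}
where the hidden constant depends only on $\|\phi\|_{L^\infty}$ and on the star-shape parameter $\rho_D$ (used to bound the length of the integration path within $D$).

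Next I would differentiate $m$ times in $x$, producing a kernel of order $n-(l+1-m)$; since $0\le m\le l$, this kernel is locally integrable. A Schur-test argument on a domain of diameter at most $h_D$ (using \eqref{assume2}) then yields the $L^2\to L^2$ estimate
\begin{equation*}
\|D^m(\xi-Q^l\xi)\|_{L^2(D)}\apprle h_D^{l+1-m}\,|\xi|_{H^{l+1}(D)}.
\end{equation*}
Since $Q^l\xi\in\mathbb{P}_l$, the infimum in the statement is no larger than the value at $q=Q^l\xi$, and the desired bound follows.

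The main obstacle I anticipate is tracking the dependence of all constants on $\rho_D$ while keeping them independent of the particular shape of $\partial D$. This is cleanest via a scaling argument: after dilating by $h_D^{-1}$ so that $D$ sits inside the unit ball and contains a ball of radius $\rho_D$, both the kernel bound and the Schur-test estimate depend only on $\rho_D$; undoing the scaling restores exactly the factor $h_D^{l+1-m}$. The $C^{0,1}$ regularity of $\partial D$ plays no direct role in this step, since the approximant and its error bound are controlled entirely by the inscribed and circumscribed balls in \eqref{assume2}.
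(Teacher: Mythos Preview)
Your proposal is correct and follows precisely the approach the paper invokes: the paper does not give its own argument here but simply refers to \cite{Brenner07}, Lemma~4.3.8, whose proof is exactly the Dupont--Scott averaged Taylor polynomial construction you outline. Your sketch of the kernel bound, the differentiation in $x$, and the Schur/Young inequality on a domain of diameter $h_D$, together with the scaling argument to isolate the dependence on $\rho_D$, reproduces that reference faithfully.
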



\begin{lemma}\label{trace}
	(Trace Inequality (2.18) ) \cite{Brenner17-2}. 
	If $D$ is shape-regular, then we have
	$$
	h_D^{-1}\|\xi\|_{L^2(\partial D)}^2 \apprle  h_D^{-2}\|\xi\|_{L^2(D)}^2 + \|\nabla \xi \|_{L^2(D)}^2, \ \forall \xi \in H^1(D ).
	$$
\end{lemma}
{ 
\begin{remark}
    (2.18) in \cite{Brenner17-2} is also valid for $D$ here with curved sides/faces. The proof is the same.
\end{remark}
}
\subsection{ $L^2$ Projection Operators}
{  We define the projection $Q_h$ as 
	\begin{equation}\label{l2proj}
		Q_h v|_D := (Q_{k,D}^0 v_0 , Q_{k,D}^b v_b),\
		\forall v \in W(D),
	\end{equation}
	where $Q_{k,D}^0$ is the $L^2$ projection operator from $L^2 (D )$ to $\mathbb{P}_k (D);$
	$Q_{k,D}^b$ is the $L^2$ projection operator
	from $L^2 (e)$ to $\mathbb{P}_k|_e$, 
	where $e$ is a side or face of $D$, no need to be straight or flat, and $\mathbb{P}_k|_e$ is the space of restricted parts of polynomials with degrees no more than $k$ on $e$.\\
	Then, let $\mathbb{Q}_{k-1,D}$ be the $L^2$ projection operator from $[L^2(D)]^n$ to  $[\mathbb{P}_{k-1} (D)]^n, n= 2,3.$   }

With shape-regular $D$ and definitions of $Q_h$ and $\mathbb{Q}_{k-1, D}$, we obtain Lemmas \ref{l4} to \ref{leq}. The proof of Lemma \ref{l4} can be found in \cite{Brenner17-2} (see Lemma 2.3 and Lemma 3.9).  Proofs of Lemmas \ref{l_discrete} and \ref{ler} (Lemma \ref{ler} depends on Lemma \ref{bramble} and \eqref{QkD0}) are similar to Lemmas 3 and 6 in \cite{guan2020w}, respectively. Lemma \ref{leq} is crucial for the error analysis of our method.
\begin{lemma}\cite{Brenner17-2}\label{l4} Assume that $D$ is shape-regular. Then, we have
	\begin{align}
		&|p|_{H^1(D)} \apprle h_D^{-1}\| p \|_{L^2(D)},\ \forall p\in \mathbb{P}_k(D) \label{l44}\\
		&|Q_{k,D}^0\xi|_{H^1(D)} \apprle |\xi|_{H^1(D)},\ \forall \xi\in H^1(D) \label{QkD0}
	\end{align}
\end{lemma}
\begin{lemma}\cite{guan2020w}\label{l_discrete}
	For any $\vq\in [\mathbb{P}_k(D)]^n$, $D$ is shape-regular, we have
	$$
	h_D\|\vq\|^2_{L^2(\partial D)}+h_D^{2}\|\nabla\cdot \vq\|^2_{L^2(D)}
	\apprle 
	\|\vq\|^2_{L^2(D)}.
	$$	
\end{lemma}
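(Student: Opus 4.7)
The plan is to bound the two terms on the left-hand side separately, and in each case reduce the bound to a combination of the trace inequality (Lemma \ref{trace}) and the inverse inequality (Lemma \ref{l4}) applied componentwise to $\vq$.

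First I would handle the divergence term. Writing $\vq=(q_1,\dots,q_n)$ with each $q_i\in\mathbb{P}_k(D)$, I have $\nabla\cdot\vq=\sum_i\partial_i q_i$, so by Cauchy--Schwarz in the finite sum,
\[
\|\nabla\cdot\vq\|_{L^2(D)}^2 \;\le\; n\sum_{i=1}^n \|\partial_i q_i\|_{L^2(D)}^2 \;\le\; n\,|\vq|_{H^1(D)}^2.
\]
Applying Lemma \ref{l4} to each component (each $q_i$ is a polynomial of degree at most $k$) then gives $|\vq|_{H^1(D)}\apprle h_D^{-1}\|\vq\|_{L^2(D)}$, and multiplying through by $h_D^2$ yields the desired bound for the second summand.

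Next I would treat the boundary term. Each component $q_i$ belongs to $H^1(D)$, so Lemma \ref{trace} gives
\[
h_D^{-1}\|q_i\|_{L^2(\partial D)}^2 \apprle h_D^{-2}\|q_i\|_{L^2(D)}^2 + \|\nabla q_i\|_{L^2(D)}^2.
\]
Summing over $i$ produces $h_D^{-1}\|\vq\|_{L^2(\partial D)}^2\apprle h_D^{-2}\|\vq\|_{L^2(D)}^2+|\vq|_{H^1(D)}^2$. The $H^1$ seminorm is again controlled by $h_D^{-1}\|\vq\|_{L^2(D)}$ via Lemma \ref{l4}, so after multiplying by $h_D^2$ I obtain $h_D\|\vq\|_{L^2(\partial D)}^2\apprle\|\vq\|_{L^2(D)}^2$. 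Adding the two estimates completes the proof, with the hidden constants inheriting their dependence on $\rho_D$ and $k$ only from Lemmas \ref{trace} and \ref{l4}.

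There is no real obstacle here: the lemma is a routine consequence of the already-established scaled trace and inverse inequalities, once one observes that both can be applied to each polynomial component of $\vq$ and that $\|\nabla\cdot\vq\|_{L^2(D)}$ is dominated by the $H^1$-seminorm. The only point that warrants a bit of care is keeping track of the factors of $h_D$ so that the final inequality has the stated scaling; in particular, the factor $h_D$ (rather than $h_D^{-1}$) in front of the boundary term is crucial and comes from multiplying the trace inequality by $h_D^2$ after absorbing the gradient via the inverse estimate.
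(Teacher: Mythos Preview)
Your proof is correct and follows essentially the same approach as the paper: both arguments apply the trace inequality (Lemma~\ref{trace}) and the inverse inequality (Lemma~\ref{l4}) componentwise to $\vq=(q_1,\dots,q_n)$, then use that $\|\nabla\cdot\vq\|_{L^2(D)}$ is controlled by the componentwise $H^1$-seminorms. The only cosmetic difference is that the paper packages the two terms into a single componentwise estimate $h_D\|q_i\|_{L^2(\partial D)}^2+h_D^2|q_i|_{H^1(D)}^2\apprle\|q_i\|_{L^2(D)}^2$ before summing, whereas you treat the boundary and divergence terms in separate paragraphs.
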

\begin{lemma}\cite{guan2020w}\label{ler}
	Let $D$ be shape-regular, then for $\xi\in H^{k+1}(D)$, we have
	\begin{align}
		&\|\xi-Q_{k,D}^0 \xi\|_{L^2(D)}^2+
		h_D^2 |\xi-Q_{k,D}^0 \xi |_{H^1(D)}^2
		\apprle  h_D^{2(k+1)}\|\xi\|_{H^{k+1}(D)}^2, \label{l2Q1} \\ 
		&\|\nabla\xi - \mathbb{Q}_{k-1,D}\nabla\xi
		\|_{L^2(D)}^2
		+
		h_D^2|\nabla\xi - \mathbb{Q}_{k-1,D}\nabla\xi|_{H^1(D)}^2
		\apprle h_D^{2k}\|\xi\|_{H^{k+1}(D)}^2. \label{l2Q2}
	\end{align}
\end{lemma}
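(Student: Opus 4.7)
The plan is to derive both estimates by combining the Bramble--Hilbert estimate (Lemma \ref{bramble}) with the inverse inequality (Lemma \ref{l4}) and the $L^2$-stability of orthogonal projections. The $L^2$ piece of \eqref{l2Q1} is immediate: since $Q_{k,D}^0 \xi$ is by definition the best $L^2$-approximation of $\xi$ in $\mathbb{P}_k(D)$, Lemma \ref{bramble} with $l=k$, $m=0$ gives
\[
\|\xi - Q_{k,D}^0 \xi\|_{L^2(D)} \le \inf_{q\in\mathbb{P}_k(D)} \|\xi - q\|_{L^2(D)} \apprle h_D^{k+1}|\xi|_{H^{k+1}(D)}.
\]

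For the $H^1$-seminorm piece, the first step is to pick an arbitrary $q\in\mathbb{P}_k(D)$ and use the fact that $Q_{k,D}^0$ fixes polynomials to write
\[
\xi - Q_{k,D}^0\xi = (\xi - q) - Q_{k,D}^0(\xi - q).
\]
Taking the $H^1$-seminorm and applying the triangle inequality splits the estimate into a best-approximation term, controlled directly by Lemma \ref{bramble} with $m=1$ as $|\xi-q|_{H^1(D)} \apprle h_D^k |\xi|_{H^{k+1}(D)}$, and a projection term $|Q_{k,D}^0(\xi-q)|_{H^1(D)}$. To the second term I would apply the inverse inequality of Lemma \ref{l4} (since $Q_{k,D}^0(\xi-q) \in \mathbb{P}_k(D)$) followed by $L^2$-contractivity of $Q_{k,D}^0$:
\[
|Q_{k,D}^0(\xi-q)|_{H^1(D)} \apprle h_D^{-1}\|Q_{k,D}^0(\xi-q)\|_{L^2(D)} \le h_D^{-1}\|\xi - q\|_{L^2(D)} \apprle h_D^{k}|\xi|_{H^{k+1}(D)},
\]
where the last step again uses Lemma \ref{bramble} with an appropriate choice of $q$. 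Multiplying through by $h_D$ yields the $H^1$ contribution to \eqref{l2Q1}.

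The second estimate \eqref{l2Q2} is handled by an identical argument applied componentwise to the vector field $\nabla\xi$, which belongs to $[H^k(D)]^n$ since $\xi\in H^{k+1}(D)$, and which is projected onto $[\mathbb{P}_{k-1}(D)]^n$. Applying Lemma \ref{bramble} with $l=k-1$ in place of $l=k$, together with the bound $|\nabla\xi|_{H^k(D)} \le |\xi|_{H^{k+1}(D)}$, replaces the exponent $k+1$ by $k$ throughout, and the inverse inequality again absorbs one factor of $h_D^{-1}$ when passing from the $L^2$- to the $H^1$-seminorm.

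The only real obstacle is bookkeeping rather than analysis: one must make sure the inverse inequality is applied only to a polynomial (which is why the decomposition through $q\in\mathbb{P}_k(D)$ is essential), and that each hidden constant inherits its dependence only on $\rho_D$ and $k$ via Lemmas \ref{bramble} and \ref{l4}. No new tool beyond what has already been developed in the paper is needed.
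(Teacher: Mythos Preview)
Your proposal is correct and follows essentially the same route as the paper: both arguments use the Bramble--Hilbert estimate for the $L^2$ piece, then insert an arbitrary polynomial $q\in\mathbb{P}_k(D)$ (respectively $\mathbb{P}_{k-1}(D)$), exploit $Q_{k,D}^0 q = q$ to split the $H^1$-seminorm by the triangle inequality, and handle the projection term via the inverse inequality together with the $L^2$-contractivity of the projection. The paper carries out the $n=2$ case of \eqref{l2Q2} explicitly componentwise, but the content is identical to what you outline.
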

\begin{lemma}\label{leq}
	Let $Q_h$ be the operator in \eqref{l2proj}. Then with shape-regular $D$,  we have
	\begin{align}
		&(\nabla_{w} Q_h \xi ,\vq)_D = (\mathbb{Q}_{k-1,D}  \nabla\xi ,\vq)_D 
		+ 
		\langle  
		Q^b_{k,D}\xi -\xi, 
		\vq \cdot\vn
		\rangle_{\partial D},\
		\forall  \xi\in H^1(D), \label{QhQ1}
		\\
		&\left|
		\langle  
		Q^b_{k,D}\xi -\xi, 
		\vq \cdot\vn
		\rangle_{\partial D}
		\right|
		\apprle
		h_D^k\|\xi\|_{H^{k+1}(D)}\|\vq\|_{L^2(D)},\
		\forall  \xi\in H^{k+1}(D), \label{QhQ2} 
		\\
		&\|\nabla_{w} Q_h \xi -\mathbb{Q}_{k-1,D}  \nabla\xi \|_{L^2(D)} 
		\apprle
		h_D^k\|\xi\|_{H^{k+1}(D)},\
		\forall  \xi\in H^{k+1}(D), \label{QhQ3} 
	\end{align}
	where $\vq,\nabla_{w} Q_h \xi \in [\mathbb{P}_{k-1} (D)]^n$, the hidden constants only depend on $\rho_D$ and $k$.
\end{lemma}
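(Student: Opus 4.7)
The plan is to establish the three parts in order, since (3) follows from (1) and (2).

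For the identity \eqref{QhQ1}, I would start from the definition of the discrete weak gradient \eqref{wg2} applied to $Q_h \xi = (Q^0_{k,D}\xi, Q^b_{k,D}\xi)$, giving
\[
(\nabla_{w} Q_h \xi, \vq)_D = -\int_D Q^0_{k,D}\xi\, \nabla\cdot \vq\, dx + \int_{\partial D} Q^b_{k,D}\xi\, \vq\cdot\vn\, dS.
\]
Since $\vq \in [\mathbb{P}_{k-1}(D)]^n$, we have $\nabla\cdot\vq \in \mathbb{P}_{k-2}(D) \subset \mathbb{P}_k(D)$, so by the defining property of the $L^2$ projection $Q^0_{k,D}$ the first term equals $-\int_D \xi\, \nabla\cdot\vq\, dx$. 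An integration by parts turns this into $\int_D \nabla\xi \cdot \vq\, dx - \int_{\partial D} \xi\, \vq\cdot\vn\, dS$, and a second use of the $L^2$ projection property (now for $\mathbb{Q}_{k-1,D}$, since $\vq \in [\mathbb{P}_{k-1}(D)]^n$) replaces $\nabla\xi$ by $\mathbb{Q}_{k-1,D}\nabla\xi$ in the volume integral. Collecting terms yields \eqref{QhQ1}.

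For the boundary estimate \eqref{QhQ2}, I would apply Cauchy--Schwarz edge by edge and bound the two factors separately. For the first factor, using that $Q^b_{k,D}\xi$ is the $L^2(\partial D)$ best approximation from $\mathbb{P}_k|_{\partial D}$, I can compare it with the trace of $Q^0_{k,D}\xi$:
\[
\|Q^b_{k,D}\xi - \xi\|_{L^2(\partial D)} \leq \|Q^0_{k,D}\xi - \xi\|_{L^2(\partial D)}.
\]
Applying the trace inequality (Lemma \ref{trace}) and then Lemma \ref{ler} gives $\|Q^b_{k,D}\xi - \xi\|_{L^2(\partial D)} \apprle h_D^{k+1/2}\|\xi\|_{H^{k+1}(D)}$. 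For the second factor, Lemma \ref{l_discrete} gives $\|\vq\cdot\vn\|_{L^2(\partial D)} \leq \|\vq\|_{L^2(\partial D)} \apprle h_D^{-1/2}\|\vq\|_{L^2(D)}$. Multiplying yields the $h^k$ estimate claimed in \eqref{QhQ2}.

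Finally, \eqref{QhQ3} follows from a standard duality trick: subtracting $(\mathbb{Q}_{k-1,D}\nabla\xi, \vq)_D$ from both sides of \eqref{QhQ1} gives
\[
(\nabla_w Q_h \xi - \mathbb{Q}_{k-1,D}\nabla\xi, \vq)_D = \langle Q^b_{k,D}\xi - \xi, \vq\cdot\vn\rangle_{\partial D}
\]
for all $\vq \in [\mathbb{P}_{k-1}(D)]^n$. Choosing the test function $\vq = \nabla_w Q_h \xi - \mathbb{Q}_{k-1,D}\nabla\xi$ itself and invoking \eqref{QhQ2} gives $\|\vq\|_{L^2(D)}^2 \apprle h^k \|\xi\|_{H^{k+1}(D)} \|\vq\|_{L^2(D)}$, from which \eqref{QhQ3} follows.

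The main obstacle I anticipate is the boundary estimate \eqref{QhQ2}: the edges/faces are only $C^{0,1}$ curvilinear, so $\vq\cdot\vn$ is not a polynomial on $\partial D$ and the orthogonality of $Q^b_{k,D}\xi - \xi$ against $\mathbb{P}_k|_e$ cannot be exploited directly to gain an extra factor of $h$. The crude Cauchy--Schwarz route is therefore the right one, and the scaling works out exactly because the trace inequality loses $h^{-1/2}$ while the projection error on the boundary supplies $h^{k+1/2}$.
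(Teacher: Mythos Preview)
Your proposal is correct and follows essentially the same route as the paper: the identity \eqref{QhQ1} via the definition of $\nabla_w$, the projection property of $Q^0_{k,D}$, integration by parts, and the projection property of $\mathbb{Q}_{k-1,D}$; then Cauchy--Schwarz plus Lemma~\ref{l_discrete} for \eqref{QhQ2}; and finally the self-testing argument for \eqref{QhQ3}. The only cosmetic difference is that in bounding $\|Q^b_{k,D}\xi-\xi\|_{L^2(\partial D)}$ the paper compares with an arbitrary $p\in\mathbb{P}_k(D)$ and closes with Bramble--Hilbert (Lemma~\ref{bramble}) after the trace inequality, whereas you pick $p=Q^0_{k,D}\xi$ specifically and invoke Lemma~\ref{ler}; these are equivalent.
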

\begin{proof}
	By \eqref{wg2}, integration by parts and the definitions of $\mathbb{Q}_{k-1,D}$, $Q_h$, we have
	\begin{eqnarray*}
		(\nabla_{w} Q_h \xi ,\vq)_D 
		&=& 
		-(Q^0_{k,D} \xi ,\nabla\cdot\vq)_D + 
		\langle  
		Q^b_{k,D}\xi, 
		\vq \cdot\vn
		\rangle_{\partial D}\\
		&=& 
		-(\xi ,\nabla\cdot\vq)_D 
		+ 
		\langle  
		\xi, 
		\vq \cdot\vn
		\rangle_{\partial D}
		+ 
		\langle  
		Q^b_{k,D}\xi - \xi, 
		\vq \cdot\vn
		\rangle_{\partial D}\\
		&=& 
		(\mathbb{Q}_{k-1,D}  \nabla\xi ,\vq)_D + 
		\langle  
		Q^b_{k,D}\xi -\xi, 
		\vq \cdot\vn
		\rangle_{\partial D}
	\end{eqnarray*}
	so that \eqref{QhQ1} is obtained.
	
	To get \eqref{QhQ2},  with Lemma \ref{l_discrete}, we have
	\begin{eqnarray}\label{Q_b1}
		\left|
		\langle  
		Q^b_{k,D}\xi -\xi, 
		\vq \cdot\vn
		\rangle_{\partial D}
		\right|
		&\apprle&
		\|Q^b_{k,D}\xi -\xi\|_{L^2(\partial D)}\|\vq \|_{L^2(\partial D)} \nonumber \\
		&\apprle&
		h_D^{-\frac12}\|Q^b_{k,D}\xi -\xi\|_{L^2(\partial D)}\|\vq \|_{L^2( D)}
	\end{eqnarray}
	then let $p\in \mathbb{P}_{k}(D)$,
	\begin{eqnarray*}\label{Q_bxi}
		\|Q^b_{k,D}\xi -\xi\|_{L^2(\partial D)}
		&\apprle&
		\|\xi-p\|_{L^2(\partial D)}
	\end{eqnarray*}
	so that  with Lemma \ref{trace} and inequality \eqref{l2Q1}, let $p$ be $Q_{k,D}^0\xi$, we have
	\begin{equation}\label{Q_b2}
		h_D^{-\frac12}\|Q^b_{k,D}\xi -\xi\|_{L^2(\partial D)}\apprle h^k\|\xi\|_{H^{k+1}(D)},
	\end{equation}
	with \eqref{Q_b1} and \eqref{Q_b2}, we get  \eqref{QhQ2}.
	
	To get \eqref{QhQ3}, from \eqref{QhQ1}, we have
	\begin{eqnarray*}
		(\nabla_{w} Q_h \xi -\mathbb{Q}_{k-1,D}\nabla\xi,\vq)_D 
		=
		\langle  
		Q^b_{k,D}\xi -\xi, 
		\vq \cdot\vn
		\rangle_{\partial D}
	\end{eqnarray*}
	let $\vq = \nabla_{w} Q_h \xi -\mathbb{Q}_{k-1,D}\nabla\xi$, with \eqref{Q_b1}
	\begin{eqnarray*}
		\|\nabla_{w} Q_h \xi -\mathbb{Q}_{k-1,D}\nabla\xi\|^2_{L^2(D)} 
		&\apprle&
		\langle  
		Q^b_{k,D}\xi -\xi, 
		\vq \cdot\vn
		\rangle_{\partial D} \\
		&\apprle&
		h_D^{-\frac12}\|Q^b_{k,D}\xi -\xi\|_{L^2(\partial D)}
		\|\nabla_{w} Q_h \xi -\mathbb{Q}_{k-1,D}\nabla\xi\|_{L^2(D)} 
	\end{eqnarray*}
	with \eqref{Q_b2}, we get  \eqref{QhQ3}.
\end{proof}
\begin{remark}
	If edge/face $e \subset \partial D$ is part of a line/plane, then 
	$\langle  
	Q^b_{k,D}\xi -\xi, 
	\vq \cdot\vn
	\rangle_{e} = 0$.
\end{remark}
\subsection{The Weak Galerkin Finite Element Scheme}\label{wgfem-scheme}
Let $\Omega$ be a bounded domain (2D or 3D) with $C^{0,1}$ boundary or interface. Suppose $\mathcal{T}_h$ is the partition of $\Omega$, each element $D$ of $\mathcal{T}_h$ is shape-regular, and $\mathcal{E}_h$ is the set of sides/faces in $\mathcal{T}_h$, $h = \max\limits_{D \in \mathcal{T}_h} h_D.$ 
\begin{figure}[H]
	\begin{center}
		\begin{tikzpicture}[scale = 1.5]
			\draw (0,0) rectangle (4,2);
			\draw (1.8,0) .. controls (2,1) and (2,0) .. (2.3,2);
			\node [ ] at (2,1 ) {$e$};
			\node [ ] at (1,1 ) {$D_1$};
			\node [ ] at (3,1 ) {$D_2$};
		\end{tikzpicture}
	\end{center}\caption{$e$ is shared by two elements}\label{fg-e}
\end{figure}
Let $D_0$ be the inner part of $D$, $\mathbb{P}_k(D_0)$ be the space of polynomials on $D_0$ with degrees no more than $k$, and on each side/face, $e\in \mathcal{E}_h,$ let $\mathbb{P}_k|_e$ be the space of traces of polynomials $\mathbb{P}_k(\mathbb{R}^n)$ on $e$, so the basis functions on shared $e$, see Figure \ref{fg-e}, are uniquely determined by collecting the linearly independent traces of $\mathbb{P}_k(\mathbb{R}^n)$. For example, traces of $\mathbb{P}_1(\mathbb{R}^2)$ on $e$, in Figure \ref{fg-e}, are $\{1,x,y\}$. Then the weak Galerkin finite element space is given by \eqref{Sjl}
\begin{equation}\label{Sjl}
	V_h:= \{v: v|_{D_0}\in \mathbb{P}_k(D_0)\ \forall D\in \mathcal{T}_h \ {\rm and}\ v|_{e}\in \mathbb{P}_k|_e \ \forall e\in \mathcal{E}_h \}.
\end{equation}
Let the space $V^0_h$ be the subspace of $V_h$ which has vanishing boundary value on $\partial\Omega$ 
\begin{equation}\label{Sjl0}
	V^0_h := \{v: v\in V_h \ {\rm and}\ v|_{\partial\Omega} =0  \}.
\end{equation}
We then define $V_h|_D$ as the space $\{v=(v_0,v_b): v_0\in \mathbb{P}_k(D_0)\ {\rm and}\ v_b\in \mathbb{P}_k|_e\ \forall e\in \partial D  \}.$
\begin{lemma}\label{lem_h1}
	If $D$ is shape-regular, for any $\xi \in H^{k+1} (D)$ and
	$v \in V_h|_D$, we have
	\begin{align}
		&\left|
		h_D^{-1}
		\langle  
		Q_{k,D}^0 \xi-Q_{k,D}^b \xi ,  
		v_0-v_b
		\rangle_{\partial D}
		\right|\apprle h_D^k\|\xi\|_{H^{k+1}(D)}
		h_D^{-\frac12}
		\| v_0-v_b \|_{L^2(\partial D)}, \label{e-trace1}
		\\
		&\left|
		\langle  
		(\nabla \xi - \mathbb{Q}_{k-1,D} \nabla \xi)\cdot\vn,   
		v_0-v_b
		\rangle_{\partial D}
		\right|\apprle   h_D^k\|\xi\|_{H^{k+1}(D)}
		h_D^{-\frac12}
		\| v_0-v_b \|_{L^2(\partial D)}, \label{e-trace2}
	\end{align}
	where $k\geq 1$ and the hidden constant only depends on $\rho_D$ and $k$. 
\end{lemma}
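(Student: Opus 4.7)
The overall strategy for both estimates is the same: apply the Cauchy--Schwarz inequality on $\partial D$ to isolate $\|v_0-v_b\|_{L^2(\partial D)}$, and then control the remaining boundary norm of an approximation error by the scaled trace inequality (Lemma~\ref{trace}) combined with the interior approximation bounds in Lemma~\ref{ler}. Concretely, one needs to extract the factor $h_D^{-1/2}\|v_0-v_b\|_{L^2(\partial D)}$ on the right-hand side, so the quantity to be estimated in each case is the boundary $L^2$ norm of an approximation defect, multiplied by $h_D^{1/2}$ to balance the scaling.

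For \eqref{e-trace1}, Cauchy--Schwarz reduces the problem to showing
\[
\|Q_{k,D}^0\xi - Q_{k,D}^b\xi\|_{L^2(\partial D)} \apprle h_D^{k+1/2}\|\xi\|_{H^{k+1}(D)}.
\]
Insert $\pm\xi$ and use the triangle inequality. The term $\|Q_{k,D}^0\xi - \xi\|_{L^2(\partial D)}$ is handled by applying Lemma~\ref{trace} to $Q_{k,D}^0\xi-\xi$ and then invoking the first estimate in Lemma~\ref{ler}, which gives exactly the required power of $h_D$. For $\|\xi-Q_{k,D}^b\xi\|_{L^2(\partial D)}$, I would exploit the edge-wise best-approximation property of $Q_{k,D}^b$: for any $p\in\mathbb{P}_k(D)$,
\[
\|\xi-Q_{k,D}^b\xi\|_{L^2(\partial D)}\leq \|\xi-p\|_{L^2(\partial D)},
\]
and then bound $\|\xi-p\|_{L^2(\partial D)}$ by the trace inequality plus Bramble--Hilbert (Lemma~\ref{bramble}) with an optimal choice of $p$. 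This is essentially the argument already used for \eqref{Q_b2} in the proof of Lemma~\ref{leq}.

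For \eqref{e-trace2}, Cauchy--Schwarz together with $|\vn|=1$ reduces matters to
\[
\|\nabla\xi-\mathbb{Q}_{k-1,D}\nabla\xi\|_{L^2(\partial D)} \apprle h_D^{k-1/2}\|\xi\|_{H^{k+1}(D)}.
\]
Since $\mathbb{Q}_{k-1,D}\nabla\xi$ is a polynomial vector field, I would apply the trace inequality of Lemma~\ref{trace} component-wise to $\nabla\xi-\mathbb{Q}_{k-1,D}\nabla\xi$ and then use the second estimate \eqref{l2Q2} of Lemma~\ref{ler}, which controls both the $L^2$ term and the $H^1$ seminorm term appearing in the trace inequality by $h_D^{2(k-1)}\|\xi\|_{H^{k+1}(D)}^2$. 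Multiplying through by $h_D^{1/2}$ yields the claimed bound.

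The main technical point — though not a serious obstacle — is the $Q_{k,D}^0$ versus $Q_{k,D}^b$ mismatch in \eqref{e-trace1}: the two projections act on different domains, so a direct stability argument is not available and one must insert $\pm\xi$ and estimate both defects individually. Once this split is made, everything reduces to routine application of the curvilinear-geometry versions of the trace inequality and Bramble--Hilbert already established, so the hidden constants inherit dependence only on $\rho_D$ and $k$.
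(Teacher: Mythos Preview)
Your proposal is correct and follows essentially the same route as the paper: Cauchy--Schwarz on $\partial D$ to peel off $h_D^{-1/2}\|v_0-v_b\|_{L^2(\partial D)}$, then for \eqref{e-trace1} the split $\|Q_{k,D}^0\xi-Q_{k,D}^b\xi\|_{L^2(\partial D)}\le \|Q_{k,D}^0\xi-\xi\|_{L^2(\partial D)}+\|Q_{k,D}^b\xi-\xi\|_{L^2(\partial D)}$ handled via Lemma~\ref{trace}, Lemma~\ref{ler}, and \eqref{Q_b2}, and for \eqref{e-trace2} the direct combination of Lemma~\ref{trace} with \eqref{l2Q2}. The paper's proof is terser but identical in substance.
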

\begin{proof}
	To get \eqref{e-trace1}, we have
	\begin{eqnarray*}
		\left|
		h_D^{-1}
		\langle  
		Q_{k,D}^0 \xi-Q_{k,D}^b \xi , 
		v_0-v_b
		\rangle_{\partial D}
		\right|
		&\apprle& 
		h_D^{-\frac12}
		\| Q_{k,D}^0 \xi- Q_{k,D}^b \xi \|_{L^2(\partial D)}
		h_D^{-\frac12}
		\| v_0-v_b \|_{L^2(\partial D)},
	\end{eqnarray*}
	for the first term on the right side, we have  
	$$
	h_D^{-\frac12}
	\| Q_{k,D}^0 \xi- Q_{k,D}^b \xi \|_{L^2(\partial D)}
	\apprle
	h_D^{-\frac12}
	\| Q_{k,D}^0 \xi- \xi \|_{L^2(\partial D)}
	+
	h_D^{-\frac12}
	\| Q_{k,D}^b \xi- \xi \|_{L^2(\partial D)},
	$$
	with Lemma \ref{trace}, Lemma \ref{ler} and \eqref{Q_b2}, then \eqref{e-trace1} is obtained.
	
	To get \eqref{e-trace2}, we have
	\begin{eqnarray*}
		\left|
		\langle  
		(\nabla \xi - \mathbb{Q}_{k-1,D} \nabla \xi)\cdot\vn,   
		v_0-v_b
		\rangle_{\partial D}
		\right|
		\apprle
		h_D^{\frac12} 
		\|\nabla \xi - \mathbb{Q}_{k-1,D} \nabla \xi\|_{L^2(\partial D)} 
		h_D^{-\frac12}
		\| v_0-v_b \|_{L^2(\partial D)},
	\end{eqnarray*}
	with Lemma \ref{trace} and Lemma \ref{ler}, \eqref{e-trace2} is obtained.
\end{proof}
\begin{lemma}\cite{guan2020w}\label{K-bound}
	Assume that $D$ is shape-regular. Then we have
	\begin{equation}
		\|\nabla v_0\|^2_{L^2(D)}
		\apprle
		\|\nabla_{w} v\|^2_{L^2(D)}+
		h_D^{-1}\|v_b-v_0\|^2_{L^2(\partial D)},\  \forall v\in V_h|_{D},
	\end{equation}
	the hidden constant only depends on $\rho_D$ and $k$.
\end{lemma}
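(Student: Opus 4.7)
The plan is to start from the defining identity \eqref{wg2} for the discrete weak gradient and exploit the fact that since $v_0 \in \mathbb{P}_k(D)$, its gradient $\nabla v_0$ lies in $[\mathbb{P}_{k-1}(D)]^n$ and is therefore an admissible test function. Specifically, I will take $\vq = \nabla v_0$ in \eqref{wg2} to get
\begin{equation*}
(\nabla_w v, \nabla v_0)_D = -\int_D v_0 \, \nabla\!\cdot\!(\nabla v_0)\, dx + \int_{\partial D} v_b\, \nabla v_0 \cdot \vn \, dS,
\end{equation*}
and then apply classical integration by parts to $(\nabla v_0, \nabla v_0)_D$ to rewrite
\begin{equation*}
\|\nabla v_0\|_{L^2(D)}^2 = -\int_D v_0\,\nabla\!\cdot\!(\nabla v_0)\, dx + \int_{\partial D} v_0\, \nabla v_0 \cdot \vn \, dS.
\end{equation*}
Subtracting these two identities produces the clean relation
\begin{equation*}
\|\nabla v_0\|_{L^2(D)}^2 = (\nabla_w v, \nabla v_0)_D + \int_{\partial D} (v_0 - v_b)\, \nabla v_0 \cdot \vn \, dS.
\end{equation*}

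The next step is to estimate the right-hand side. The volume term is handled directly by Cauchy--Schwarz, yielding $\|\nabla_w v\|_{L^2(D)}\,\|\nabla v_0\|_{L^2(D)}$. For the boundary term I would apply Cauchy--Schwarz on $\partial D$ and then invoke Lemma \ref{l_discrete} with $\vq = \nabla v_0 \in [\mathbb{P}_{k-1}(D)]^n$ to bound
\begin{equation*}
\|\nabla v_0\|_{L^2(\partial D)} \apprle h_D^{-1/2}\,\|\nabla v_0\|_{L^2(D)},
\end{equation*}
giving
\begin{equation*}
\left|\int_{\partial D}(v_0 - v_b)\,\nabla v_0 \cdot \vn \, dS\right| \apprle h_D^{-1/2}\,\|v_0 - v_b\|_{L^2(\partial D)}\,\|\nabla v_0\|_{L^2(D)}.
\end{equation*}

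Combining the two bounds and dividing through by $\|\nabla v_0\|_{L^2(D)}$ (the case $\nabla v_0 = 0$ being trivial) yields
\begin{equation*}
\|\nabla v_0\|_{L^2(D)} \apprle \|\nabla_w v\|_{L^2(D)} + h_D^{-1/2}\,\|v_0 - v_b\|_{L^2(\partial D)},
\end{equation*}
after which squaring and using $(a+b)^2 \leq 2a^2 + 2b^2$ delivers the stated estimate. I do not anticipate a real obstacle here: the argument is essentially an integration-by-parts computation combined with the discrete trace inequality already established in Lemma \ref{l_discrete}. The only conceptual point to double-check is that the space of admissible test functions in \eqref{wg2} on $V_h$ contains $\nabla v_0$, which is ensured by the convention $v_0 \in \mathbb{P}_k(D)$ together with $\nabla_w = \nabla_{w,k-1,D}$ implicit in the preceding lemmas.
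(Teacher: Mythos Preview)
Your proof is correct and follows essentially the same approach as the paper: both start from the identity $(\nabla_w v - \nabla v_0,\vq)_D = \langle v_b - v_0,\vq\cdot\vn\rangle_{\partial D}$ obtained via integration by parts in \eqref{wg2}, and both control the boundary term using Lemma~\ref{l_discrete}. The only cosmetic difference is the choice of test function: the paper takes $\vq = \nabla_w v - \nabla v_0$ (yielding the intermediate bound $\|\nabla_w v - \nabla v_0\|_{L^2(D)} \apprle h_D^{-1/2}\|v_b - v_0\|_{L^2(\partial D)}$ and then the triangle inequality), whereas you take $\vq = \nabla v_0$ directly.
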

\section{The weak Galerkin finite element method for elliptic equation}\label{poisson}
Let $\Omega$ be a bounded domain with $C^{0,1}$ boundary in $\mathbb{R}^n$, $f\in L^2(\Omega)$, the Poisson's equation is
\begin{equation}\label{poisson-equation}
	\begin{cases}
		\ -\Delta u            &= f, \\
		\ u|_{\partial \Omega} &= 0.
	\end{cases}
\end{equation}
For any $v\in V_h,$ the weak gradient of $v$ is defined on each element $D$ by \eqref{wg2},   respectively. 
For any $u,v\in V_h$, the bilinear form is defined as
\begin{equation}\label{bhr} 
	a_{h}(u,v) = \sum\limits_{D\in\mathcal{T}_h}\int_{D}\nabla_{w} u\cdot\nabla_{w} v\ {\rm d} x.
\end{equation}
The stabilization term is:
\begin{equation}\label{stable} 
	s_{h}(u,v) = \sum\limits_{D\in\mathcal{T}_h} 
	h_D^{-1}
	\langle  
	u_0-u_b, v_0-v_b
	\rangle_{\partial D}.
\end{equation}
A numerical solution for \eqref{poisson-equation} can be obtained by seeking $u_h =(u_0,u_b)\in V^0_h$ such that 
\begin{equation}\label{num-poisson}
	a_s(u_h,v) := a_h(u_h,v)+s_h(u_h,v) = (f,v_0)_{\Omega}, \ \forall v=(v_0,v_b)\in V_h^0.
\end{equation}
Then the weak-1 norm of $v\in V$ is defined as
\begin{equation}\label{w1n}
	| v |^2_{k-1,w} = \sum\limits_{D\in\mathcal{T}_h}
	\|\nabla_{w} v \|_{L^2(D)}^2
	+ 
	h_D^{-1}
	\| 
	v_0-v_b 
	\|_{L^2(\partial D)}^2,
\end{equation}
where $k\geq 1$ is an integer.\\
We generate Figure \ref{fg-horn} by PolyMesher \cite{talischi2012polymesher} to show how the mesh in 2D could be.
\begin{figure}[H]
	\begin{center}
		\includegraphics[trim={0 3cm 0 2cm},clip, width=0.5\linewidth]{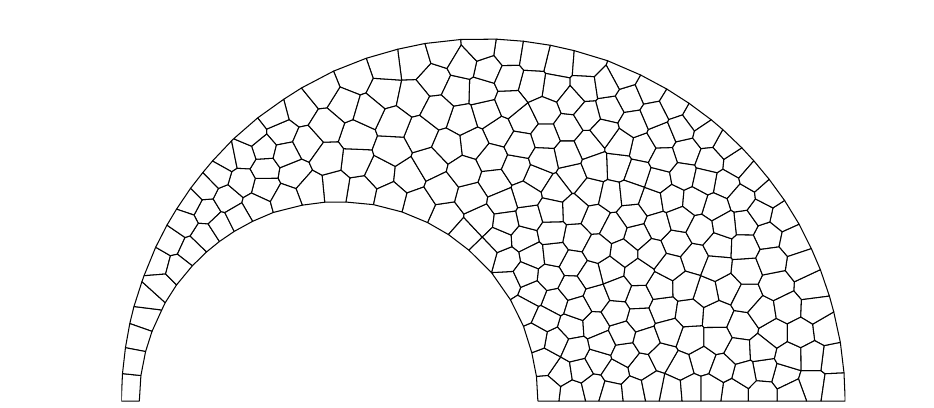}
	\end{center}\caption{A shape regular partition of domain $\Omega$}\label{fg-horn}
\end{figure}
\begin{lemma}\label{poincare}
	Suppose the partition $\mathcal{T}_h$ is shape-regular. Then we have
	$$
	\|v_0\|_{L^2(\Omega)} \apprle |v|_{k-1,w}, \ \forall v=(v_0,v_b)\in V_h^0,
	$$
	the hidden constant only depends on $\rho_D$ and $k$.
\end{lemma}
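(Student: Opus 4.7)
\medskip
\noindent\textbf{Proof plan.} The plan is to combine Lemma \ref{K-bound}, which controls the broken gradient of $v_0$ by the weak-$1$ norm, with a Brenner-type broken Poincaré–Friedrichs inequality that converts broken gradient plus inter-element jumps of $v_0$ into an $L^2$ bound. The boundary conditions baked into $V_h^0$, together with the single-valuedness of $v_b$ across every interior edge, will let us absorb all the jump terms into $|v|_{k-1,w}$.

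\medskip
\noindent\textbf{Step 1: control the broken gradient.} Applying Lemma \ref{K-bound} on each $D\in\mathcal{T}_h$ and summing,
\[
\sum_{D\in\mathcal{T}_h}\|\nabla v_0\|_{L^2(D)}^2
\;\apprle\; \sum_{D\in\mathcal{T}_h}\|\nabla_{w} v\|_{L^2(D)}^2 + \sum_{D\in\mathcal{T}_h} h_D^{-1}\|v_0-v_b\|_{L^2(\partial D)}^2
\;=\; |v|_{k-1,w}^2.
\]

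\medskip
\noindent\textbf{Step 2: broken Poincaré–Friedrichs for the mesh.} I would establish, for any piecewise $H^1$ function $w$ on $\mathcal{T}_h$,
\[
\|w\|_{L^2(\Omega)}^2 \;\apprle\; \sum_{D\in\mathcal{T}_h}\|\nabla w\|_{L^2(D)}^2 + \sum_{e\in \mathcal{E}_h^0} h_e^{-1}\|[w]\|_{L^2(e)}^2 + \sum_{e\subset \partial\Omega} h_e^{-1}\|w\|_{L^2(e)}^2,
\]
where $[w]$ is the jump across an interior edge/face. On our curvilinear polytopal mesh this is obtained by pulling each element back to $\mathfrak{B}_D$ through the Lipschitz isomorphism $\Phi$ of Section 2.1 (using the norm equivalences \eqref{iso2}--\eqref{iso3} and the trace estimate of Lemma \ref{trace}), constructing a globally continuous Oswald-type surrogate whose elementwise deviation from $w$ is controlled by the jumps, and then applying the classical Poincaré inequality to that surrogate — the blueprint is Brenner's broken Poincaré argument, and the curvilinear setting enters only through the $\Phi$-pullback. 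Applied to $w=v_0$, this bounds $\|v_0\|_{L^2(\Omega)}^2$ by the right-hand side above.

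\medskip
\noindent\textbf{Step 3: dispose of the jump and boundary terms.} Because $v_b$ is single-valued across each $e\in \mathcal{E}_h^0$, on such an edge shared by $D_1,D_2$ we may insert $v_b$ and use the triangle inequality,
\[
\|[v_0]\|_{L^2(e)} \leq \|v_0|_{D_1}-v_b\|_{L^2(e)} + \|v_0|_{D_2}-v_b\|_{L^2(e)},
\]
and on $e\subset\partial\Omega$, the condition $v_b\equiv 0$ gives $v_0|_e = (v_0-v_b)|_e$. Summing over edges (each interior edge contributes to exactly two elements) and using $h_e \approx h_D$ yields
\[
\sum_{e\in\mathcal{E}_h^0} h_e^{-1}\|[v_0]\|_{L^2(e)}^2 + \sum_{e\subset\partial\Omega}h_e^{-1}\|v_0\|_{L^2(e)}^2
\;\apprle\; \sum_{D\in\mathcal{T}_h} h_D^{-1}\|v_0-v_b\|_{L^2(\partial D)}^2 \;\leq\; |v|_{k-1,w}^2.
\]
Combining Steps 1--3 completes the bound.

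\medskip
\noindent\textbf{Principal obstacle.} Steps 1 and 3 are bookkeeping. The real work is Step 2, namely verifying the DG-type broken Poincaré–Friedrichs inequality over a partition whose elements only have $C^{0,1}$ boundaries; the hidden constant must depend only on $\rho_D$, so the argument cannot use any smooth reference element but must route every estimate through the Lipschitz isomorphism to $\mathfrak{B}_D$ and a telescoping/connectivity argument across the mesh skeleton.
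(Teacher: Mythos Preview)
Your three-step plan is sound in principle, but it takes a genuinely different route from the paper. The paper's proof is a two-line deferral to Lemma~7.1 of \cite{Lin15}, whose argument is a \emph{duality} one: pick $\vec\phi\in [H^1(\Omega)]^n$ with $\nabla\cdot\vec\phi=v_0$ and $\|\vec\phi\|_{H^1(\Omega)}\apprle\|v_0\|_{L^2(\Omega)}$, integrate by parts on each $D$, and use the single-valuedness of $v_b$ across interior faces (together with $v_b=0$ on $\partial\Omega$) to get $\sum_D\langle v_b,\vec\phi\cdot\vn\rangle_{\partial D}=0$. One then bounds $\|v_0\|_{L^2(\Omega)}^2$ by $\bigl(\|\nabla v_0\|_h+(\sum_D h_D^{-1}\|v_0-v_b\|_{L^2(\partial D)}^2)^{1/2}\bigr)\|\vec\phi\|_{H^1(\Omega)}$ via Cauchy--Schwarz and Lemma~\ref{trace}, and finishes with Lemma~\ref{K-bound}. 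This route needs nothing beyond the element-level trace inequality already established for curvilinear $D$; in particular it requires no new broken Poincar\'e tool. Your route, by contrast, externalizes the connectivity argument into a DG-type broken Poincar\'e--Friedrichs inequality that you then have to \emph{prove} on $C^{0,1}$ curvilinear polytopal meshes (your Step~2) --- feasible, but a genuine extra layer of work, precisely the ``principal obstacle'' you flag.

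One caution in your Step~3: you invoke $h_e\approx h_D$, but the paper's shape-regularity assumptions \eqref{assume1}--\eqref{assume2} do not rule out small edges or faces (indeed the reference to \cite{Brenner17-2} suggests they are meant to be allowed). Since the weak-$1$ norm \eqref{w1n} is scaled by $h_D^{-1}$, not $h_e^{-1}$, you cannot pass from $h_e^{-1}\|[v_0]\|_{L^2(e)}^2$ to $h_D^{-1}\|v_0-v_b\|_{L^2(\partial D)}^2$ without that comparability. To rescue your approach you would have to formulate Step~2 with element-diameter scalings on the jump terms from the outset; the duality argument in the paper sidesteps this issue automatically because the boundary term there is paired against $\vec\phi\cdot\vn$ and estimated via the $H^1$ trace on $D$, which naturally produces the $h_D$ scaling.
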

\begin{proof} 
	The key to proving Lemma \ref{poincare} is that on each edge or face $e$, $v|_e$ is unique. 
	Then, same as Lemma 7.1 in \cite{Lin15}, we have the discrete Poincar$\acute{\rm e}$ inequality.
\end{proof}
Also, we have the existence and uniqueness of the solution of \eqref{num-poisson}.
\subsection{Error Analysis}
Let $u \in H^2(\Omega)$ be the solution of \eqref{poisson-equation} and $v\in V_h^0$. 
Then, multiply \eqref{poisson-equation} by $v_0$ of $v=(v_0,v_b)\in V_h^0$ we have
\begin{equation}\label{wu}
	\sum\limits_{D\in\mathcal{T}_h} (\nabla u,\nabla v_0)_D
	=(f,v_0)_{\Omega}+\sum\limits_{D\in\mathcal{T}_h}\langle  v_0-v_b , \nabla u\cdot\vn  \rangle_{\partial D},
\end{equation}
where 
$\sum\limits_{D\in\mathcal{T}_h}\langle  v_b , \nabla u\cdot\vn  \rangle_{\partial D} = 0 .$\\
It follows from \eqref{wg2}, \eqref{QhQ1} and the integration by parts 
\begin{align}\label{wI_h}
	(\nabla_{w} & Q_hu,\nabla_{w}v)_D \nonumber \\
	&= (\mathbb{Q}_{k-1,D}\nabla u, \nabla_{w} v)_D 
	+ 
	\langle  
	Q^b_{k,D}u -u, 
	\nabla_{w}v \cdot\vn
	\rangle_{\partial D}  \nonumber \\
	&= -(v_0,\nabla\cdot(\mathbb{Q}_{k-1,D}\nabla u))_D
	+
	\langle v_b,(\mathbb{Q}_{k-1,D}\nabla u)\cdot\vn \rangle_{\partial D}
	+ 
	\langle  
	Q^b_{k,D}u -u, 
	\nabla_{w}v \cdot\vn
	\rangle_{\partial D} 
	\\ 
	&= (\nabla v_0,\mathbb{Q}_{k-1,D}\nabla u)_D
	-
	\langle v_0-v_b,(\mathbb{Q}_{k-1,D}\nabla u)\cdot\vn \rangle_{\partial D} 
	+ 
	\langle  
	Q^b_{k,D}u -u, 
	\nabla_{w}v \cdot\vn
	\rangle_{\partial D} 
	\nonumber 
	\\
	&= (\nabla u,\nabla v_0)_D
	-
	\langle v_0-v_b,(\mathbb{Q}_{k-1,D}\nabla u)\cdot\vn \rangle_{\partial D}
	+ 
	\langle  
	Q^b_{k,D}u -u, 
	\nabla_{w}v \cdot\vn
	\rangle_{\partial D}. \nonumber
\end{align}
Combining \eqref{wu} and \eqref{wI_h}, we have
\begin{align}\label{w-error1}
	\sum\limits_{D\in\mathcal{T}_h} 
	(\nabla_{w} Q_hu,\nabla_{w}v)_D
	=\ &
	(f,v_0)_{\Omega} 
	+
	\sum\limits_{D\in\mathcal{T}_h}
	\langle  
	v_0-v_b , 
	(\nabla u - \mathbb{Q}_{k-1,D} \nabla u)\cdot\vn  
	\rangle_{\partial D} 
	\\
	&+
	\sum\limits_{D\in\mathcal{T}_h}
	\langle  
	Q^b_{k,D}u -u, 
	\nabla_{w}v \cdot\vn
	\rangle_{\partial D}. \nonumber
\end{align}
Adding $s_h({Q}_h u, v)$ to both sides of \eqref{w-error1} gives
\begin{align}\label{w-error2}
	a_s(Q_h u,v)
	=\ &
	(f,v_0)_{\Omega} 
	+
	\sum\limits_{D\in\mathcal{T}_h}
	\langle  
	v_0-v_b , 
	(\nabla u - \mathbb{Q}_{k-1,D} \nabla u)\cdot\vn  
	\rangle_{\partial D}  
	\\
	&+
	\sum\limits_{D\in\mathcal{T}_h}
	\langle  
	Q^b_{k,D}u -u, 
	\nabla_{w}v \cdot\vn
	\rangle_{\partial D}
	+s_h(Q_hu,v). \nonumber
\end{align}
Subtracting \eqref{num-poisson} from \eqref{w-error2}, we have the error equation
\begin{align}\label{w-error3}
	a_s(e_h,v)
	=&
	\sum\limits_{D\in\mathcal{T}_h}
	\langle  
	v_0-v_b , 
	(\nabla u - \mathbb{Q}_{k-1,D} \nabla u)\cdot\vn  
	\rangle_{\partial D} \\
	&+
	\sum\limits_{D\in\mathcal{T}_h}
	\langle  
	Q^b_{k,D}u -u, 
	\nabla_{w}v \cdot\vn
	\rangle_{\partial D}
	+s_h(Q_hu,v). \nonumber
\end{align}
where
$$
e_h|_D = (e_0,e_b)_D:= (Q_{k,D}^0 u -u_0, Q_{k,D}^b u -u_b)_D= (Q_hu-u_h)|_{D}
$$
which is the error between the weak Galerkin finite element solution $(u_0,u_b)$ and the $L^2$ projection of the exact solution.
Then we define a norm $\|\cdot\|_h$ as
$$
\|\vv\|_h^2 := \sum\limits_{D\in\mathcal{T}_h} \|\vv\|_{L^2(D)}, \ \forall \vv\in [L^2(\Omega)]^n.
$$
\begin{theorem}
	Let $u_h \in V_h^0$ be the weak Galerkin finite element solution of the
	problem \eqref{poisson-equation}. Assume that the exact solution is so regular that
	$u \in H^{k+1} (\Omega)$. Then we have
	\begin{eqnarray}
		\|\nabla u - \nabla_{w} u_h\|_h 
		&\apprle& h^k\|u\|_{H^{k+1}(\Omega)}, \label{we1} \\
		\|\nabla u - \nabla u_0\|_h 
		&\apprle& 
		h^k\|u\|_{H^{k+1}(\Omega)}, \label{we2} 
	\end{eqnarray}
	the hidden constants only depend on $\rho_D$ and $k$.
\end{theorem}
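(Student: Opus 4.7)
The plan is to derive an energy estimate on $e_h = Q_h u - u_h$ in the weak-1 norm $|\cdot|_{k-1,w}$, and then recover the two stated bounds by triangle inequalities using Lemma \ref{ler}, Lemma \ref{leq} and Lemma \ref{K-bound}. The error equation \eqref{w-error3} is already in place, so the main work is to estimate its right-hand side and then convert an $|e_h|_{k-1,w}$ bound into the two gradient error bounds.

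First I would test \eqref{w-error3} with $v = e_h$. Noting that the bilinear form $a_s(\cdot,\cdot)$ is coercive with
$$a_s(e_h,e_h) = \sum_{D\in\mathcal{T}_h}\|\nabla_{w}e_h\|_{L^2(D)}^2 + s_h(e_h,e_h) = |e_h|_{k-1,w}^2,$$
the task reduces to controlling each of the three right-hand terms by $h^k\|u\|_{H^{k+1}(\Omega)}\,|e_h|_{k-1,w}$. The first term is handled locally by \eqref{e-trace2} of Lemma \ref{lem_h1}; the second term is handled locally by \eqref{QhQ2} of Lemma \ref{leq}, which produces a factor $\|\nabla_{w}e_h\|_{L^2(D)}$; the third term $s_h(Q_hu,e_h)$ is handled by \eqref{e-trace1} of Lemma \ref{lem_h1}. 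In each case I would sum over $D\in\mathcal{T}_h$ and apply the discrete Cauchy--Schwarz inequality, so that the $h_D^{k}\|u\|_{H^{k+1}(D)}$ factors assemble into $h^k\|u\|_{H^{k+1}(\Omega)}$ and the remaining factors assemble into $|e_h|_{k-1,w}$. Cancelling one power of $|e_h|_{k-1,w}$ then yields
$$|e_h|_{k-1,w} \apprle h^k\|u\|_{H^{k+1}(\Omega)}.$$

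For \eqref{we1} I would split using the triangle inequality,
$$\|\nabla u - \nabla_{w}u_h\|_h \le \|\nabla u - \mathbb{Q}_{k-1,D}\nabla u\|_h + \|\mathbb{Q}_{k-1,D}\nabla u - \nabla_{w}Q_hu\|_h + \|\nabla_{w}Q_hu - \nabla_{w}u_h\|_h,$$
bounding the first term by \eqref{l2Q2} of Lemma \ref{ler}, the second term by \eqref{QhQ3} of Lemma \ref{leq}, and the third term by $|e_h|_{k-1,w}$ via the energy estimate above. For \eqref{we2} I would write
$$\|\nabla u - \nabla u_0\|_h \le \|\nabla u - \nabla Q_{k,D}^0 u\|_h + \|\nabla(Q_{k,D}^0 u - u_0)\|_h = \|\nabla u - \nabla Q_{k,D}^0 u\|_h + \|\nabla e_0\|_h,$$
where the first term is controlled by \eqref{l2Q1} of Lemma \ref{ler} and the second term is controlled elementwise by Lemma \ref{K-bound}, which gives $\|\nabla e_0\|_h^2 \apprle |e_h|_{k-1,w}^2$; combining with the energy estimate finishes the proof.

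The main obstacle I expect is bookkeeping in the coercivity step: one must confirm that the duality pairings on $\partial D$ produced by $\nabla_{w}e_h\cdot\vn$ in the second right-hand term of \eqref{w-error3} can legitimately be absorbed as $\|\nabla_{w}e_h\|_{L^2(D)}$ (which is exactly what \eqref{QhQ2} delivers for $\vq = \nabla_{w}e_h \in [\mathbb{P}_{k-1}(D)]^n$), and that the $h_D^{-1/2}\|v_0-v_b\|_{L^2(\partial D)}$ factors summed over $D$ are controlled by $s_h(e_h,e_h)^{1/2}$, and hence by $|e_h|_{k-1,w}$. Everything else is assembly by Cauchy--Schwarz, so provided the local estimates match up cleanly with the definition of $|\cdot|_{k-1,w}$ the two bounds follow.
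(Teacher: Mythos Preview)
Your proposal is correct and follows essentially the same route as the paper: testing \eqref{w-error3} with $v=e_h$, bounding the three right-hand terms via \eqref{QhQ2} and Lemma~\ref{lem_h1} to obtain $|e_h|_{k-1,w}\apprle h^k\|u\|_{H^{k+1}(\Omega)}$, and then deriving \eqref{we1} and \eqref{we2} by the same triangle-inequality splittings using Lemmas~\ref{ler}, \ref{leq}, and \ref{K-bound}. The bookkeeping concern you flag is exactly what \eqref{QhQ2} handles, so there is no gap.
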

\begin{proof}
	Let $v = e_h$ in \eqref{w-error3}, we have
	\begin{align*}
		|e_h|_{k-1,w}^2
		=&
		\sum\limits_{D\in\mathcal{T}_h}
		\langle  
		e_0-e_b , 
		(\nabla u - \mathbb{Q}_{k-1,D} \nabla u)\cdot\vn  
		\rangle_{\partial D} \\
		&+
		\sum\limits_{D\in\mathcal{T}_h}
		\langle  
		Q^b_{k,D}u -u, 
		\nabla_{w}e_h \cdot\vn
		\rangle_{\partial D}
		+s_h(Q_hu,e_h).
	\end{align*}
	It then follows from \eqref{QhQ2} and Lemma \ref{lem_h1} 
	\begin{equation}\label{error1}
		|e_h|_{k-1,w}^2 \apprle  h^k\|u\|_{H^{k+1}(\Omega)}|e_h|_{k-1,w}.
	\end{equation}
	Based on \eqref{error1}, firstly, we prove \eqref{we1}, 
	\begin{eqnarray*}
		\|\nabla u - \nabla_{w} u_h\|_h 
		&\leq&
		\|\nabla u - \mathbb{Q}_{k-1}(\nabla u)\|_h 
		+\|\mathbb{Q}_{k-1}(\nabla u) - \nabla_{w}Q_h u\|_h
		+\|\nabla_{w}Q_h u - \nabla_{w} u_h\|_h, 
	\end{eqnarray*}
	with Lemma \ref{ler} and Lemma \ref{leq} and
	$$
	\|\nabla_{w}(Q_h u - u_h)\|_h\leq |e_h|_{k-1,w}
	$$
	we have
	\eqref{we1}.
	
	Secondly, with Lemma \ref{K-bound}, we have
	\begin{eqnarray*}
		\sum_{D\in\mathcal{T}_h}\|\nabla (Q^0_{k,D} u -u_h|_{D_0})\|^2_{L^2(D)} 
		&=&
		\sum\limits_{D\in\mathcal{T}_h} \|\nabla  e_0\|^2_{L^2(D)}\\
		&\apprle&
		\sum\limits_{D\in\mathcal{T}_h} \|\nabla_{w} e_h\|^2_{L^2(D)} +h_D^{-1}\|e_b-e_0\|^2_{L^2(\partial D)}\\
		&\apprle&
		|u_h-Q_h u|_{k-1,w}^2
	\end{eqnarray*}
	which means
	$$
	\sum_{D\in\mathcal{T}_h}\|\nabla (Q^0_{k,D} u -u_h|_{D_0})\|^2_{L^2(D)}  \apprle h^{2k}\|u\|^2_{H^{k+1}(\Omega)}.
	$$
	Also by Lemma \ref{ler}
	$$
	\sum_{D\in\mathcal{T}_h}\|\nabla (Q^0_{k,D} u -u)\|^2_{L^2(D)}  \apprle h^{2k}\|u\|^2_{H^{k+1}(\Omega)},
	$$
	then we have \eqref{we2}
	$$
	\|\nabla u - \nabla u_0\|_h  \apprle h^k\|u\|_{H^{k+1}(\Omega)}.
	$$
\end{proof}
\begin{theorem}
	Let $u_h \in V_h^0$ be the weak Galerkin finite element solution of the
	problem \eqref{poisson-equation}. Assume that the exact solution is so regular that
	$u \in H^{k+1} (\Omega)$. Then we have
	\begin{eqnarray}\label{l2e}
		\|u - u_0\|_{L^2(\Omega)}
		&\apprle& h^{k+1}\|u\|_{H^{k+1}(\Omega)},
	\end{eqnarray}
	the hidden constant only depends on $\rho_D$ and $k$.
\end{theorem}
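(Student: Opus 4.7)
The plan is to execute the standard Aubin--Nitsche duality argument, adapted to the weak Galerkin setting. First, by the triangle inequality,
$$
\|u-u_0\|_{L^2(\Omega)}\leq \|u-Q_{k,D}^0 u\|_{L^2(\Omega)} + \|e_0\|_{L^2(\Omega)},
$$
and Lemma \ref{ler} already bounds the first piece by $h^{k+1}\|u\|_{H^{k+1}(\Omega)}$. So the entire task reduces to proving $\|e_0\|_{L^2(\Omega)}\apprle h^{k+1}\|u\|_{H^{k+1}(\Omega)}$.

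Next I would introduce the dual problem: find $\phi\in H_0^1(\Omega)$ with $-\Delta \phi = e_0$ in $\Omega$, assuming elliptic $H^2$-regularity so that $\|\phi\|_{H^2(\Omega)}\apprle \|e_0\|_{L^2(\Omega)}$. Testing with $e_0$ and integrating by parts element-wise, while using that $e_b$ is single-valued across interior edges and vanishes on $\partial\Omega$, gives
$$
\|e_0\|_{L^2(\Omega)}^2 = \sum_{D\in \mathcal{T}_h}(\nabla e_0,\nabla\phi)_D - \sum_{D\in\mathcal{T}_h}\langle e_0-e_b,\nabla\phi\cdot\vn\rangle_{\partial D}.
$$
The computation underlying \eqref{wI_h}, but applied with $\phi$ in place of $u$ and $v=e_h$, rewrites $\sum_D (\nabla e_0,\nabla\phi)_D$ as $a_h(Q_h\phi,e_h)$ plus two boundary correction terms. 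Substituting back, I would obtain an identity for $\|e_0\|^2$ involving $a_h(Q_h\phi,e_h)$ together with integrals of the form $\langle e_0-e_b,(\nabla\phi-\mathbb{Q}_{k-1,D}\nabla\phi)\cdot\vn\rangle_{\partial D}$ and $\langle Q_{k,D}^b\phi-\phi,\nabla_w e_h\cdot\vn\rangle_{\partial D}$.

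To eliminate $a_h(Q_h\phi,e_h)=a_s(e_h,Q_h\phi)-s_h(e_h,Q_h\phi)$, I would invoke the error equation \eqref{w-error3} with test function $v=Q_h\phi\in V_h^0$. This replaces $a_s(e_h,Q_h\phi)$ by the two consistency boundary integrals on the right of \eqref{w-error3} plus $s_h(Q_h u,Q_h\phi)$. Each of the resulting six terms should then be bounded by $h^{k+1}\|u\|_{H^{k+1}(\Omega)}\|\phi\|_{H^2(\Omega)}$ using Cauchy--Schwarz and the following ingredients: Lemma \ref{lem_h1} (with $\xi=u$, $k$th order) against the $Q_h\phi$-jumps; Lemma \ref{ler} (with $k=1$) applied to $\phi\in H^2$ to extract the extra $h^{3/2}$ on edges; the already proved $H^1$-estimate $|e_h|_{k-1,w}\apprle h^k\|u\|_{H^{k+1}}$ to handle the $s_h(e_h,Q_h\phi)$ and $\nabla_w e_h$ factors; and Lemma \ref{leq} together with Lemma \ref{l_discrete} for the term $\langle Q_{k,D}^b u-u,\nabla_w Q_h\phi\cdot\vn\rangle_{\partial D}$. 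Dividing by $\|\phi\|_{H^2(\Omega)}\apprle\|e_0\|$ yields the desired bound.

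The main obstacle I anticipate is precisely that last term, $\sum_D\langle Q_{k,D}^b u-u,\nabla_w Q_h\phi\cdot\vn\rangle_{\partial D}$. A direct application of \eqref{QhQ2} with $\vq=\nabla_w Q_h\phi$ only yields $h^k\|u\|_{H^{k+1}}\|\nabla\phi\|_{L^2}$, which is one order short. The remedy I would try is to split $\nabla_w Q_h\phi=(\nabla_w Q_h\phi-\mathbb{Q}_{k-1,D}\nabla\phi)+\mathbb{Q}_{k-1,D}\nabla\phi$; the first summand gives an extra $h^{1/2}$ on $\partial D$ via Lemma \ref{l_discrete} combined with \eqref{QhQ3}, yielding the needed $h^{k+1}$, while for the second summand one uses the $L^2$-orthogonality of $Q_{k,D}^b u-u$ against $\mathbb{P}_k|_e$ (so that on any straight edge or face the contribution vanishes, cf.\ the remark after Lemma \ref{leq}) and controls any residual contribution from curvature by the Lipschitz property of the parametrization $\Phi$. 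Making this extra-order gain rigorous on genuinely curvilinear faces is the delicate point of the whole argument.
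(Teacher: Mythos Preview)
Your duality framework and decomposition match the paper's argument up to and including the point you flag as the main obstacle, and your treatment of the first summand $\nabla_{w} Q_h\phi-\mathbb{Q}_{k-1,D}\nabla\phi$ is correct. The gap is exactly where you located it, but your proposed remedy for the second summand does not close. On a genuinely curved edge $e$ the outward normal $\vn$ is not constant, so $(\mathbb{Q}_{k-1,D}\nabla\phi)\cdot\vn$ is \emph{not} an element of $\mathbb{P}_k|_e$, and the orthogonality of $Q^b_{k,D}u-u$ to $\mathbb{P}_k|_e$ is of no use. Nor is there any inter-element cancellation, because $\mathbb{Q}_{k-1,D}\nabla\phi$ is element-wise defined and discontinuous across $e$. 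The Lipschitz isomorphism $\Phi$ only furnishes norm equivalences; it cannot manufacture the missing power of $h$.

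The paper's fix is simpler and bypasses the curvilinear issue entirely: instead of subtracting $\mathbb{Q}_{k-1,D}\nabla\phi$, subtract the \emph{exact} gradient $\nabla\phi$. Since $\phi\in H^2(\Omega)$, the flux $\nabla\phi\cdot\vn$ is single-valued (up to sign) across each interior edge, $Q^b_{k,D}u-u$ is single-valued there, and $Q^b_{k,D}u-u=0$ on $\partial\Omega$ because $u|_{\partial\Omega}=0$. Hence
\[
\sum_{D\in\mathcal{T}_h}\langle Q^b_{k,D}u-u,\ \nabla\phi\cdot\vn\rangle_{\partial D}=0
\]
by exact global cancellation, irrespective of curvature. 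What remains is $\sum_D\langle Q^b_{k,D}u-u,\ (\nabla_{w}Q_h\phi-\nabla\phi)\cdot\vn\rangle_{\partial D}$, and one shows $h_D^{1/2}\|\nabla_{w}Q_h\phi-\nabla\phi\|_{L^2(\partial D)}\apprle h_D\|\phi\|_{H^2(D)}$ by splitting through $\mathbb{Q}_{k-1,D}\nabla\phi$ and invoking Lemmas~\ref{trace}, \ref{l4}, \ref{ler} and the estimate \eqref{QhQ3}; this is essentially the argument you already gave for your first summand, now applied to the full difference. Combined with \eqref{Q_b2} this yields the desired $h_D^{k+1}\|u\|_{H^{k+1}(D)}\|\phi\|_{H^2(D)}$.
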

\begin{proof}
	We begin with a dual problem seeking $\phi\in H_0^2(\Omega)$ such that
	$
	-\Delta \phi = e_0.
	$
	Suppose we have $\|\phi\|_{H^2(\Omega)}\apprle \|e_0\|_{L^2(\Omega)}$.
	
	Then we have
	\begin{equation}\label{l2e1}
		\|e_0\|_{L^2(\Omega)}^2 = 
		\sum\limits_{D\in\mathcal{T}_h}(\nabla\phi,\nabla e_0)_D 
		-
		\sum\limits_{D\in\mathcal{T}_h}
		\langle 
		\nabla\phi\cdot\vn, 
		e_0-e_b 
		\rangle_{\partial D}.
	\end{equation}
	Let $u=\phi$ and $v=e_h$ in \eqref{wI_h}, we have
	\begin{equation}\label{l2e2}
		(\nabla \phi,\nabla e_0)_D	
		=
		(\nabla_{w} Q_h \phi,\nabla_{w} e_h)_D
		+
		\langle e_0-e_b,(\mathbb{Q}_{k-1,D}\nabla \phi)\cdot\vn \rangle_{\partial D}
		-
		\langle  
		Q^b_{k,D}\phi -\phi, 
		\nabla_{w}e_h \cdot\vn
		\rangle_{\partial D}. 
	\end{equation}
	Combining \eqref{l2e1} and \eqref{l2e2}, we have
	\begin{align}\label{l2e3}
		\|e_0\|_{L^2(\Omega)}^2 =\ & 
		(\nabla_{w} Q_h \phi,\nabla_{w} e_h)_\Omega
		+
		\sum\limits_{D\in\mathcal{T}_h}
		\langle 
		(\mathbb{Q}_{k-1,D}\nabla \phi-\nabla\phi)\cdot\vn, 
		e_0-e_b 
		\rangle_{\partial D} 
		\\
		& - \sum\limits_{D\in\mathcal{T}_h}
		\langle  
		Q^b_{k,D}\phi -\phi, 
		\nabla_{w}e_h \cdot\vn
		\rangle_{\partial D}. \nonumber
	\end{align}
	So that by Lemma \ref{leq} and Lemma \ref{lem_h1}, we have
	\begin{align}
		&\left|
		\sum\limits_{D\in\mathcal{T}_h}
		\langle 
		(\mathbb{Q}_{k-1,D}\nabla \phi-\nabla\phi)\cdot\vn, 
		e_0-e_b 
		\rangle_{\partial D}
		\right|
		\apprle	
		h\|\phi\|_{H^{2}(\Omega)}|e_h|_{k-1,w}, \label{term2} 
		\\
		&\left|	
		\sum\limits_{D\in\mathcal{T}_h}
		\langle  
		Q^b_{k,D}\phi -\phi, 
		\nabla_{w}e_h \cdot\vn
		\rangle_{\partial D}
		\right|
		\apprle	
		h\|\phi\|_{H^{2}(\Omega)}|e_h|_{k-1,w}.\label{term3}
	\end{align}	
	Then let $v = Q_h \phi$ in \eqref{w-error3}, such that
	\begin{eqnarray*}
		(\nabla_{w} Q_h \phi,\nabla_{w} e_h)_\Omega 
		&=&
		\sum\limits_{D\in\mathcal{T}_h}
		\langle  
		Q^0_{k,D}\phi- Q^b_{k,D}\phi , 
		(\nabla u - \mathbb{Q}_{k-1,D} \nabla u)\cdot\vn  
		\rangle_{\partial D} \label{term11}\\
		&&+
		\sum\limits_{D\in\mathcal{T}_h}
		\langle  
		Q^b_{k,D}u -u, 
		(\nabla_{w} Q_h \phi -\nabla \phi)\cdot\vn
		\rangle_{\partial D} \label{term12}
		\\
		&&+s_h(Q_hu,Q_h \phi) -s_h(e_h,Q_h \phi), \label{term13}
	\end{eqnarray*}
	where
	$$
	\sum\limits_{D\in\mathcal{T}_h}
	\langle  
	Q^b_{k,D}u -u, 
	\nabla \phi \cdot\vn
	\rangle_{\partial D} = 0.
	$$
	Same as the proof of Theorem 8.2 in \cite{Lin15}, we have
	$$
	\sum\limits_{D\in\mathcal{T}_h}
	\langle  
	Q^0_{k,D}\phi- Q^b_{k,D}\phi , 
	(\nabla u - \mathbb{Q}_{k-1,D} \nabla u)\cdot\vn  
	\rangle_{\partial D}
	\apprle
	h^{k+1}\|u\|_{H^{k+1}(\Omega)}\|\phi\|_{H^2(\Omega)}
	$$
	and
	$$
	|s_h(Q_hu,Q_h \phi)| +|s_h(e_h,Q_h \phi)|\apprle h^{k+1}\|u\|_{H^{k+1}(\Omega)}\|\phi\|_{H^2(\Omega)}.
	$$	
	Then
	\begin{eqnarray*}
		|\langle  
		Q^b_{k,D}u -u, 
		(\nabla_{w} Q_h \phi -\nabla \phi)\cdot\vn
		\rangle_{\partial D}|
		\apprle
		h_D^{-\frac12}\|Q^b_{k,D}u -u\|_{L^2(\partial D)}\
		h_D^{\frac12}\|\nabla_{w} Q_h \phi -\nabla \phi\|_{L^2(\partial D)},
	\end{eqnarray*}
	where
	\begin{align*}
		h_D^{\frac12}&\|\nabla_{w} Q_h \phi -\nabla \phi\|_{L^2(\partial D)}\\
		&\apprle
		h_D^{\frac12}\|\nabla_{w} Q_h \phi -\mathbb{Q}_{k-1,D}\nabla\phi\|_{L^2(\partial D)}
		+
		h_D^{\frac12}\|\mathbb{Q}_{k-1,D}\nabla\phi-\nabla \phi\|_{L^2(\partial D)}\\
		&\apprle
		\|\nabla_{w} Q_h \phi -\mathbb{Q}_{k-1,D}\nabla\phi\|_{L^2(D)}
		+
		h_D|\nabla_{w} Q_h \phi -\mathbb{Q}_{k-1,D}\nabla\phi|_{H^1(D)}\\
		&\quad +
		h_D^{\frac12}\|\mathbb{Q}_{k-1,D}\nabla\phi-\nabla \phi\|_{L^2(\partial D)}\\
		&\apprle
		h_D \|\phi\|_{H^2(D)}
		+
		\|\nabla_{w} Q_h \phi -\mathbb{Q}_{k-1,D}\nabla\phi\|_{L^2(D)} \\
		&\quad +
		h_D^{\frac12}\|\mathbb{Q}_{k-1,D}\nabla\phi-\nabla \phi\|_{L^2(\partial D)}\\
		&\apprle
		h_D \|\phi\|_{H^2(D)}
	\end{align*}
	by Lemma \ref{trace}, Lemma \ref{l4}, Lemma \ref{ler} and Lemma \ref{leq} 
	\begin{eqnarray*}\label{term4}
		|\langle  
		Q^b_{k,D}u -u, 
		(\nabla_{w} Q_h \phi -\nabla \phi)\cdot\vn
		\rangle_{\partial D}|
		\apprle
		h_D^{k+1} \|u\|_{H^{k+1}(D)}\|\phi\|_{H^2(D)}.
	\end{eqnarray*}
	Then 
	\begin{eqnarray}\label{term1}
		| 	(\nabla_{w} Q_h \phi,\nabla_{w} e_h)_\Omega |
		\apprle
		h^{k+1} \|u\|_{H^{k+1}(\Omega)}\|\phi\|_{H^2(\Omega)}.
	\end{eqnarray}
	By \eqref{error1}, \eqref{l2e3}, \eqref{term2}, \eqref{term3} and \eqref{term1}, we have
	\begin{eqnarray*}
		\|Q_k^0u - u_0\|_{L^2(\Omega)}
		\apprle h^{k+1}\|u\|_{H^{k+1}(\Omega)},
	\end{eqnarray*}
	with 
	$$
	\|Q_k^0u - u\|_{L^2(\Omega)}
	\apprle h^{k+1}\|u\|_{H^{k+1}(\Omega)},
	$$
	the error estimate \eqref{l2e} is obtained.
\end{proof}

\section{The weak Galerkin finite element method for elliptic interface problem}\label{interface}
Let $\Omega$ be a bounded domain with $C^{0,1}$ boundary in $\mathbb{R}^n$, $n=2,3$, $\Gamma\subset \Omega$ be the $C^{0,1}$ interface (Fig.\ref{fg2}), $f\in L^2(\Omega)$, the equation is
\begin{equation}\label{interface-eq}
	\begin{cases}
		\ -\nabla\cdot ({\beta} \nabla u)            &= f, \\
		\ [u]|_\Gamma           &= 0, \\
		\ \left(\beta_1\nabla u\cdot\vn -\beta_2\nabla u\cdot\vn\right)|_\Gamma &= g, \\
		\ u|_{\partial \Omega} &= 0,
	\end{cases}
\end{equation}
where $\vn$ is the outward normal vector to $\partial\Omega_1$, $\beta_1, \beta_2$ are two positive constants defined on $\Omega_1$ and $\Omega_2$ respectively. For simplicity, let
$[u]|_{\Gamma} := u_1|_{\Gamma} -u_2|_{\Gamma} = 0.$
\begin{figure}[H]
	\begin{center}
		\begin{tikzpicture}[scale=0.9] 
			\path[use as bounding box] (-0.18,-0.4) rectangle (4.65,3.5);
			\draw[ yshift=0cm](0,0.5) to [closed, curve through = {(1,-.5 )..(2,-0.6)..(3,-.5)} ] (4,0);
			\filldraw[opacity=0.3,gray, yshift=2.5cm](1,-0.7) to [closed, curve through = {(1.5,-0.4 )..(2,-0.6)..(2.5,-0.4)} ] (3,-0.7);
			\draw[black, yshift=2.5cm](1,-0.7) to [closed, curve through = {(1.5,-0.4 )..(2,-0.6)..(2.5,-0.4)} ] (3,-0.7);
			\node [above right] at (3.5,0.7) {$\Omega_2$};
			\node [above] at (2.3,1) {$\Omega_1$};
			\node [ ] at (1.05,1.2) {$\Gamma$};
		\end{tikzpicture}
	\end{center}
	\caption{A  domain $\Omega = \Omega_1\cup\Gamma\cup\Omega_2$ with $C^{0,1}$ interface $\Gamma$}\label{fg2}
\end{figure}
The mesh $\mathcal{T}_h$ contains shape-regular curvilinear polygons or polyhedrons which have edges or faces as parts of $\partial \Omega$ or  $\Gamma$, and there is no element cross $\Gamma$.
Here we use the same weak Galerkin finite element schemes as in Section \ref{wgfem-scheme}. For any $v\in V_h,$ the weak gradient of $v$ is defined on each element $D$ by \eqref{wg2},   respectively. 
For any $u,v\in V_h$, the bilinear form is defined by $b_{h}(u,v)$
\begin{equation}\label{bhri} 
	b_{h}(u,v) = \sum\limits_{D\in\mathcal{T}_h}\int_{D}{\beta}\nabla_{w} u\cdot\nabla_{w} v\ {\rm d} x.
\end{equation}
The stabilization term is:
\begin{equation}\label{stablei} 
	s_{h}(u,v) = \sum\limits_{D\in\mathcal{T}_h} 
	h_D^{-1}
	\langle  
	u_0-u_b, v_0-v_b
	\rangle_{\partial D}.
\end{equation}
A numerical solution for \eqref{interface-eq} can be obtained by seeking $u_h =(u_0,u_b)\in V^0_h$ such that 
\begin{equation}\label{num-interface}
	b_s(u_h,v)  = (f,v_0)_{\Omega} +\langle g, v_b\rangle_{\Gamma}, \ \forall v=(v_0,v_b)\in V_h^0.
\end{equation}
where $b_s(u_h,v):=b_h(u_h,v)+s_h(u_h,v).$
Then we define {  a new weak norm} as
\begin{equation}\label{|||}
	|||v||| = (b_s(v,v))^{\frac12},\ \forall v\in V_h^0. 
\end{equation}
Also, we have the existence and uniqueness of the solution of \eqref{interface-eq} as in \cite{Lin16}.
\subsection{Error Analysis}
Let $u \in H_0^1(\Omega)$ and $u|_{\Omega_i}\in H^2(\Omega_i), i=1,2,$ be the solution of \eqref{interface-eq} and $v\in V_h^0$. 
Then, multiply \eqref{interface-eq} by $v_0$ of $v=(v_0,v_b)\in V_h^0$ we have
\begin{equation}\label{wui}
	\sum\limits_{D\in\mathcal{T}_h} (\beta \nabla u,\nabla v_0)_D
	=(f,v_0)_{\Omega}
	+
	\langle g, v_b \rangle_{\Gamma}
	+
	\sum\limits_{D\in\mathcal{T}_h}
	\langle  v_0-v_b , \beta \nabla u\cdot\vn  \rangle_{\partial D},
\end{equation}
where 
$
\sum\limits_{D\in\mathcal{T}_h}
\langle v_b , \beta \nabla u\cdot\vn  \rangle_{\partial D} 
= 
\sum\limits_{e\in\Gamma}
\langle g,v_b   \rangle_{e}.$\\
It follows from \eqref{wg2}, \eqref{QhQ1} and the integration by parts
\begin{align}\label{wI_hi}
	(\beta\nabla_{w} Q_hu,\nabla_{w}v)_D
	&= (\beta\mathbb{Q}_{k-1,D}\nabla u, \nabla_{w} v)_D 
	+ 
	\langle  
	Q^b_{k,D}u -u, 
	\beta\nabla_{w}v \cdot\vn
	\rangle_{\partial D}   \\
	&= (\beta\nabla u,\nabla v_0)_D
	-
	\langle v_0-v_b,\beta(\mathbb{Q}_{k-1,D}\nabla u)\cdot\vn \rangle_{\partial D} \nonumber
	\\
	&\quad + 
	\langle  
	Q^b_{k,D}u -u, 
	\beta\nabla_{w}v \cdot\vn
	\rangle_{\partial D}.\nonumber
\end{align}
Combining \eqref{wui} and \eqref{wI_hi}, adding $s_h({Q}_h u, v)$ to both sides, we have
\begin{align}\label{w-error2i}
	b_s(Q_h u,v)
	&=
	(f,v_0)_{\Omega} 
	+
	\langle g,v_b \rangle_{\Gamma}
	+
	\sum\limits_{D\in\mathcal{T}_h}
	\langle  
	v_0-v_b , 
	\beta(\nabla u - \mathbb{Q}_{k-1,D} \nabla u)\cdot\vn  
	\rangle_{\partial D} 
	\\
	&\quad +
	\sum\limits_{D\in\mathcal{T}_h}
	\langle  
	Q^b_{k,D}u -u, 
	\beta\nabla_{w}v \cdot\vn
	\rangle_{\partial D}
	+s_h({Q}_h u, v). \nonumber 
\end{align}
Subtracting \eqref{num-interface} from \eqref{w-error2i}, we have the error equation
\begin{align}\label{w-error3i}
	b_s(e_h,v)
	&=
	\sum\limits_{D\in\mathcal{T}_h}
	\langle  
	v_0-v_b , 
	\beta(\nabla u - \mathbb{Q}_{k-1,D} \nabla u)\cdot\vn  
	\rangle_{\partial D}  
	\\
	&\quad +
	\sum\limits_{D\in\mathcal{T}_h}
	\langle  
	Q^b_{k,D}u -u, 
	\beta\nabla_{w}v \cdot\vn
	\rangle_{\partial D}
	+s_h(Q_hu,v).  \nonumber 
\end{align}
where
$$
e_h|_D = (e_0,e_b)_D:= (Q_{k,D}^0 u -u_0, Q_{k,D}^b u -u_b)_D= (Q_hu-u_h)|_{D}
$$
which is the error between the weak Galerkin finite element solution $(u_0,u_b)$ and the $L^2$ projection of the exact solution.
Then we define a norm $\|\cdot\|_\beta$ as
$$
\|\vv\|_\beta^2 := \sum\limits_{D\in\mathcal{T}_h} (\beta\vv, \vv)_D, \ \forall \vv\in [L^2(\Omega)]^n,
$$
and suppose $v\in H^1(\Omega)$, $v|_{\Omega_i}\in H^{k+1}(\Omega_i), i=1,2,$ we define
$$
\|v\|_{k+1,\Omega}^2 = \|v\|_{H^{k+1}(\Omega_1)}^2+\|v\|_{H^{k+1}(\Omega_2)}^2.
$$
\begin{theorem}
	Let $u_h \in V_h^0$ be the weak Galerkin finite element solution of the
	problem \eqref{interface-eq}. Assume that the exact solution is so regular that
	$u|_{\Omega_i} \in H^{k+1} (\Omega_i), i=1,2$. Then we have
	\begin{eqnarray}
		\|\nabla u - \nabla_{w} u_h\|_\beta 
		&\apprle& h^k
		\|u\|_{{k+1}, \Omega}
		, \label{we1i} \\
		\|\nabla u - \nabla u_0\|_\beta 
		&\apprle& 
		h^k
		\|u\|_{{k+1}, \Omega}
		, \label{we2i} 
	\end{eqnarray}
	the hidden constants only depend on $\rho_D$ and $k$.
\end{theorem}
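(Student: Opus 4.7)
\medskip

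The plan is to mirror the proof of the analogous theorem for the Poisson problem, using the interface error equation \eqref{w-error3i} in place of \eqref{w-error3}. The key observation is that, because the partition $\mathcal{T}_h$ is fitted to $\Gamma$ (no element crosses $\Gamma$), the coefficient $\beta$ restricted to any $D\in\mathcal{T}_h$ is a positive constant. Hence $\beta$ can be pulled outside the boundary and volume integrals on each $D$ and absorbed into the hidden constants (which now also depend on $\beta_1,\beta_2$, or equivalently their ratio). Consequently, the right-hand side of \eqref{w-error3i} has exactly the same structure as in the Poisson case, except that estimates are applied on each side of $\Gamma$ separately and the $H^{k+1}$ norms are replaced by $\|\cdot\|_{k+1,\Omega}$.

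First, I would test \eqref{w-error3i} with $v=e_h$ to get
\[
b_s(e_h,e_h) = T_1 + T_2 + T_3,
\]
where $T_1$ is the jump term $\sum_D \langle v_0-v_b, \beta(\nabla u-\mathbb{Q}_{k-1,D}\nabla u)\cdot\vn\rangle_{\partial D}$, $T_2$ is $\sum_D \langle Q^b_{k,D}u-u,\beta\nabla_{w}e_h\cdot\vn\rangle_{\partial D}$, and $T_3=s_h(Q_h u,e_h)$. By Lemma \ref{lem_h1} (applied element-wise with $\xi=u|_{\Omega_i}$, which is in $H^{k+1}$ on each $\Omega_i$) together with the piecewise-constancy of $\beta$, one bounds $|T_1|\apprle h^k\|u\|_{k+1,\Omega}\,|e_h|_{k-1,w}$. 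Similarly, \eqref{QhQ2} yields $|T_2|\apprle h^k\|u\|_{k+1,\Omega}\,\|\nabla_{w}e_h\|_h$. For $T_3$, the $L^2$-projection error estimates of Lemma \ref{ler} and the trace inequality Lemma \ref{trace} give the same order $h^k\|u\|_{k+1,\Omega}$ against $s_h(e_h,e_h)^{1/2}$. Noting that $|||e_h|||^2 \approx |e_h|^2_{k-1,w}$ (up to constants depending on $\beta_1,\beta_2$), combining these three estimates yields
\[
|||e_h||| \apprle h^k\|u\|_{k+1,\Omega}.
\]

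With this in hand, the inequality \eqref{we1i} follows by a triangle inequality decomposition
\[
\|\nabla u-\nabla_{w} u_h\|_\beta \leq \|\nabla u-\mathbb{Q}_{k-1}\nabla u\|_\beta + \|\mathbb{Q}_{k-1}\nabla u-\nabla_{w} Q_h u\|_\beta + \|\nabla_{w}(Q_h u-u_h)\|_\beta,
\]
where the first term is controlled by Lemma \ref{ler} applied on $\Omega_1$ and $\Omega_2$, the second by Lemma \ref{leq} (equation \eqref{QhQ3}), and the third by the bound on $|||e_h|||$ just obtained (using $\|\cdot\|_\beta \apprle \max(\beta_1,\beta_2)^{1/2}\|\cdot\|_h$). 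For \eqref{we2i} I would apply Lemma \ref{K-bound} element-wise to the difference $Q^0_{k,D}u-u_h|_{D_0}$ and use $\|\nabla_{w} e_h\|^2_{L^2(D)}+h_D^{-1}\|e_b-e_0\|^2_{L^2(\partial D)}\apprle |e_h|_{k-1,w}^2$, followed by Lemma \ref{ler} on each subdomain to handle $\nabla(Q_{k,D}^0 u - u)$.

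The main obstacle I anticipate is conceptual rather than computational: one must be careful that the trace $\nabla u\cdot\vn$ on $\partial D$ makes sense and that the boundary terms produced by integration by parts on elements touching $\Gamma$ collapse correctly (using the flux jump condition $\beta_1\nabla u\cdot\vn-\beta_2\nabla u\cdot\vn=g$ on $\Gamma$, which is exactly what produces the $\langle g,v_b\rangle_\Gamma$ term in \eqref{wui}). Once the error equation \eqref{w-error3i} is correctly established — and the author has already done this — the remaining estimates are essentially a repetition of the Poisson argument with $u|_{\Omega_i}\in H^{k+1}(\Omega_i)$ used in place of $u\in H^{k+1}(\Omega)$, and with $\beta$ treated as a piecewise constant that enters the hidden constants.
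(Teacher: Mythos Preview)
Your proposal is correct and follows essentially the same route as the paper: test the error equation \eqref{w-error3i} with $v=e_h$, bound the three right-hand terms via \eqref{QhQ2} and Lemma~\ref{lem_h1} to obtain $|||e_h|||\apprle h^k\|u\|_{k+1,\Omega}$, then derive \eqref{we1i} by the triangle-inequality splitting through $\mathbb{Q}_{k-1}\nabla u$ and $\nabla_w Q_h u$ (Lemmas~\ref{ler} and~\ref{leq}), and \eqref{we2i} via Lemma~\ref{K-bound} element-wise followed by Lemma~\ref{ler}. Your remark that the hidden constants pick up a dependence on $\beta_1,\beta_2$ when passing between $\|\cdot\|_\beta$, $\|\cdot\|_h$, and $|||\cdot|||$ is a fair observation; the paper's statement that the constant depends only on $\rho_D$ and $k$ glosses over this.
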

\begin{proof}
	Let $v = e_h$ in \eqref{w-error3i},  we have
	\begin{equation}
		|||e_h|||
		=
		\sum\limits_{D\in\mathcal{T}_h}
		\langle  
		e_0-e_b , 
		\beta(\nabla u - \mathbb{Q}_{k-1,D} \nabla u)\cdot\vn  
		\rangle_{\partial D} 
		+
		\sum\limits_{D\in\mathcal{T}_h}
		\langle  
		Q^b_{k,D}u -u, 
		\beta\nabla_{w}e_h \cdot\vn
		\rangle_{\partial D}
		+s_h(Q_hu,e_h).
	\end{equation}
	It then follows from \eqref{QhQ2} and Lemma \ref{lem_h1}  
	\begin{equation}\label{error1i}
		|||e_h|||^2 \apprle  h^k
		\|u\|_{{k+1}, \Omega}
		|||e_h|||.
	\end{equation}
	Based on \eqref{error1i}, firstly, we prove \eqref{we1i}, 
	\begin{eqnarray*}
		\|\nabla u - \nabla_{w} u_h\|_\beta 
		&\leq&
		\|\nabla u - \mathbb{Q}_{k-1} \nabla u \|_\beta 
		+\|\mathbb{Q}_{k-1} \nabla u  - \nabla_{w}Q_h u\|_\beta
		+\|\nabla_{w}Q_h u - \nabla_{w} u_h\|_\beta, 
	\end{eqnarray*}
	with Lemma \ref{ler} and Lemma \ref{leq} and
	$$
	\|\nabla_{w}(Q_h u - u_h)\|_\beta\leq |||e_h|||
	$$
	we have
	\eqref{we1}.
	
	Secondly, with Lemma \ref{K-bound}, we have
	\begin{eqnarray*}
		\sum_{D\in\mathcal{T}_h}
		\|\beta\nabla (Q^0_{k,D} u -u_h|_{D_0})\|^2_{L^2(D)} 
		&=&
		\sum\limits_{D\in\mathcal{T}_h} \|\beta\nabla  e_0\|^2_{L^2(D)}\\
		&\apprle&
		\sum\limits_{D\in\mathcal{T}_h} \|\beta\nabla_{w} e_h\|^2_{L^2(D)} +h_D^{-1}\|e_b-e_0\|^2_{L^2(\partial D)}\\
		&\apprle&
		|||e_h|||^2
	\end{eqnarray*}
	which means
	$$
	\sum_{D\in\mathcal{T}_h}
	\|\beta\nabla (Q^0_{k,D} u -u_h|_{D_0})\|^2_{L^2(D)}  \apprle h^{2k}
	\|u\|_{{k+1}, \Omega}^2.
	$$
	Also with Lemma \ref{ler}
	$$
	\sum_{D\in\mathcal{T}_h}\|\beta\nabla (Q^0_{k,D} u -u)\|^2_{L^2(D)}  \apprle h^{2k}
	\|u\|_{{k+1}, \Omega}^2,
	$$
	so that we have \eqref{we2i}
	$$
	\|\nabla u - \nabla u_0\|_\beta  
	\apprle 
	h^k
	\|u\|_{{k+1}, \Omega}.
	$$
\end{proof}
\begin{theorem}
	Let $u_h \in V_h$ be the weak Galerkin finite element solution of the
	problem \eqref{interface-eq}. Assume that the exact solution is so regular that
	$u|_{\Omega_i} \in H^{k+1} (\Omega_i), i=1,2$. Then we have
	\begin{eqnarray}\label{l2ei}
		\|u - u_0\|_{L^2(\Omega)}
		&\apprle& h^{k+1}\|u\|_{{k+1}, \Omega},
	\end{eqnarray}
	the hidden constant only depends on $\rho_D$ and $k$.
\end{theorem}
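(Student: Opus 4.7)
The plan is to repeat the duality argument used for the $L^2$ estimate in the Poisson case, with the modifications required by the coefficient $\beta$ and the interface. First I would introduce an auxiliary dual problem: find $\phi\in H^1_0(\Omega)$ with $\phi|_{\Omega_i}\in H^2(\Omega_i)$, $i=1,2$, satisfying
\begin{equation*}
-\nabla\cdot(\beta\nabla\phi)=e_0\text{ in }\Omega,\qquad [\phi]|_\Gamma=0,\qquad [\beta\nabla\phi\cdot\vn]|_\Gamma=0,\qquad \phi|_{\partial\Omega}=0,
\end{equation*}
together with the elliptic regularity estimate $\|\phi\|_{2,\Omega}\apprle\|e_0\|_{L^2(\Omega)}$ (analogous to the assumption made for the Poisson case, and available for piecewise smooth interface problems).

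Next I would test the dual equation against $e_0$ and integrate by parts element-by-element, yielding
\begin{equation*}
\|e_0\|_{L^2(\Omega)}^2=\sum_{D\in\mathcal{T}_h}(\beta\nabla\phi,\nabla e_0)_D-\sum_{D\in\mathcal{T}_h}\langle\beta\nabla\phi\cdot\vn,e_0-e_b\rangle_{\partial D},
\end{equation*}
where the $e_b$ terms collapse on each interior edge by the single-valuedness of $e_b$, on $\partial\Omega$ by $e_b=0$, and on $\Gamma$ by the homogeneous flux jump $[\beta\nabla\phi\cdot\vn]_\Gamma=0$. Then, substituting via the analog of \eqref{wI_hi} (with $u$ replaced by $\phi$), I can rewrite $(\beta\nabla\phi,\nabla e_0)_D$ as $(\beta\nabla_w Q_h\phi,\nabla_w e_h)_D$ plus two boundary correction terms involving $\mathbb{Q}_{k-1,D}\nabla\phi-\nabla\phi$ and $Q^b_{k,D}\phi-\phi$, exactly as in \eqref{l2e3}.

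The two correction terms are bounded by $h\|\phi\|_{2,\Omega}|||e_h|||$ using Lemma \ref{leq} and Lemma \ref{lem_h1} applied with $H^2$-regularity of $\phi$ on each element (no element crosses $\Gamma$). The main term $(\beta\nabla_w Q_h\phi,\nabla_w e_h)_\Omega$ is treated by plugging $v=Q_h\phi$ into the error equation \eqref{w-error3i}. This produces three pieces: one of the form $\langle Q^0_{k,D}\phi-Q^b_{k,D}\phi,\beta(\nabla u-\mathbb{Q}_{k-1,D}\nabla u)\cdot\vn\rangle_{\partial D}$ estimated by $h^{k+1}\|u\|_{k+1,\Omega}\|\phi\|_{2,\Omega}$ (as in Theorem 8.2 of \cite{Lin15}, since $u$ has $H^{k+1}$-regularity on each $\Omega_i$ and $\phi$ has $H^2$-regularity there); one of the form $\langle Q^b_{k,D}u-u,\beta(\nabla_w Q_h\phi-\nabla\phi)\cdot\vn\rangle_{\partial D}$ bounded by the same quantity via the chain of estimates already executed in the Poisson case using Lemma \ref{trace}, Lemma \ref{l4}, Lemma \ref{ler} and Lemma \ref{leq}; and the stabilization contribution $|s_h(Q_hu,Q_h\phi)|+|s_h(e_h,Q_h\phi)|$ controlled by $h^{k+1}\|u\|_{k+1,\Omega}\|\phi\|_{2,\Omega}$ together with \eqref{error1i}. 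Combining everything and dividing by $\|e_0\|_{L^2(\Omega)}$ gives $\|Q_{k,D}^0 u-u_0\|_{L^2(\Omega)}\apprle h^{k+1}\|u\|_{k+1,\Omega}$, after which Lemma \ref{ler} and the triangle inequality deliver \eqref{l2ei}.

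The hardest bookkeeping step will be the interface handling: I need to verify that the jump of $\beta\nabla\phi\cdot\vn$ across edges lying on $\Gamma$ vanishes so that the identity for $\|e_0\|^2$ has no leftover interface contribution, and simultaneously that the source term $\langle g,v_b\rangle_\Gamma$ appearing in \eqref{num-interface} is absent from the error equation \eqref{w-error3i}, so that testing the error equation against $Q_h\phi$ produces no residual on $\Gamma$. Once this is settled, the rest is a mechanical adaptation, with every constant depending on $\beta_1,\beta_2$ only through their ratio and on $\rho_D,k$.
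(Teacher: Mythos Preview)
Your proposal is correct and follows essentially the same duality argument as the paper: the same auxiliary problem with homogeneous flux jump, the same element-wise integration-by-parts identity for $\|e_0\|^2$, the same substitution via \eqref{wI_hi}, the same use of the error equation \eqref{w-error3i} with $v=Q_h\phi$, and the same termwise bounds invoking Lemmas \ref{trace}, \ref{l4}, \ref{ler}, \ref{leq}, \ref{lem_h1} together with Theorem~8.2 of \cite{Lin15}. Your explicit remark on the interface bookkeeping (vanishing of $[\beta\nabla\phi\cdot\vn]|_\Gamma$ and absence of $\langle g,v_b\rangle_\Gamma$ from \eqref{w-error3i}) is precisely what the paper encodes by writing ``$g=0$ as in \eqref{interface-eq}'' for the dual problem.
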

\begin{proof}
	We begin with a dual problem seeking $\phi\in H_0^1(\Omega)$ such that
	$
	-\nabla\cdot ({\beta} \nabla \phi)  = e_0,
	$ and $g=0$ as in \eqref{interface-eq}.
	Suppose we have $\|\phi\|_{2,\Omega}\apprle \|e_0\|_{L^2(\Omega)}$.
	
	Then we have
	\begin{equation}\label{l2e1i}
		\|e_0\|_{L^2(\Omega)}^2 = 
		\sum\limits_{D\in\mathcal{T}_h}(\beta\nabla\phi,\nabla e_0)_D 
		-
		\sum\limits_{D\in\mathcal{T}_h}
		\langle 
		\beta\nabla\phi\cdot\vn, 
		e_0-e_b 
		\rangle_{\partial D}.
	\end{equation}
	Let $u=\phi$ and $v=e_h$ in \eqref{wI_hi}, we have
	\begin{align}\label{l2e2i}
		(\beta\nabla \phi,\nabla e_0)_D	
		&=
		(\beta\nabla_{w} Q_h \phi,\nabla_{w} e_h)_D
		+
		\langle e_0-e_b,\beta(\mathbb{Q}_{k-1,D}\nabla \phi)\cdot\vn \rangle_{\partial D} 
		\\
		&\quad -
		\langle  
		Q^b_{k,D}\phi -\phi, 
		\beta\nabla_{w}e_h \cdot\vn
		\rangle_{\partial D}. \nonumber
	\end{align}
	Combining \eqref{l2e1i} and \eqref{l2e2i}, we have
	\begin{align}\label{l2e3i}
		\|e_0\|_{L^2(\Omega)}^2 
		&= 
		(\beta\nabla_{w} Q_h \phi,\nabla_{w} e_h)_\Omega
		+
		\sum\limits_{D\in\mathcal{T}_h}
		\langle 
		\beta(\mathbb{Q}_{k-1,D}\nabla \phi-\nabla\phi)\cdot\vn, 
		e_0-e_b 
		\rangle_{\partial D}
		\\
		&\quad -
		\sum\limits_{D\in\mathcal{T}_h}
		\langle  
		Q^b_{k,D}\phi -\phi, 
		\beta\nabla_{w}e_h \cdot\vn
		\rangle_{\partial D}. \nonumber
	\end{align}
	By Lemma \ref{leq} and Lemma \ref{lem_h1}, we have
	\begin{align}
		&	\left|
		\sum\limits_{D\in\mathcal{T}_h}
		\langle 
		\beta(\mathbb{Q}_{k-1,D}\nabla \phi-\nabla\phi)\cdot\vn, 
		e_0-e_b 
		\rangle_{\partial D}
		\right|
		\apprle	
		h\|\phi\|_{2,\Omega}|||e_h|||,\label{term2i} 
		\\
		&   \left|	
		\sum\limits_{D\in\mathcal{T}_h}
		\langle  
		Q^b_{k,D}\phi -\phi, 
		\beta\nabla_{w}e_h \cdot\vn
		\rangle_{\partial D}
		\right|
		\apprle	
		h\|\phi\|_{2,\Omega}|||e_h|||.\label{term3i}
	\end{align}	
	Then let $v = Q_h \phi$ in \eqref{w-error3i} 
	\begin{eqnarray*}
		(\beta\nabla_{w} Q_h \phi,\nabla_{w} e_h)_\Omega 
		&=&
		\sum\limits_{D\in\mathcal{T}_h}
		\langle  
		Q^0_{k,D}\phi- Q^b_{k,D}\phi , 
		\beta(\nabla u - \mathbb{Q}_{k-1,D} \nabla u)\cdot\vn  
		\rangle_{\partial D} \label{term11i}\\
		&&+
		\sum\limits_{D\in\mathcal{T}_h}
		\langle  
		Q^b_{k,D}u -u, 
		\beta(\nabla_{w} Q_h \phi -\nabla \phi)\cdot\vn
		\rangle_{\partial D} \label{term12i}
		\\
		&&+s_h(Q_hu,Q_h \phi) -s_h(e_h,Q_h \phi), \label{term13i}
	\end{eqnarray*}
	where
	$$
	\sum\limits_{D\in\mathcal{T}_h}
	\langle  
	Q^b_{k,D}u -u, 
	\beta\nabla \phi \cdot\vn
	\rangle_{\partial D} = 0.
	$$
	Same as the proof of Theorem 8.2 in \cite{Lin15}, we have
	$$
	\sum\limits_{D\in\mathcal{T}_h}
	\langle  
	Q^0_{k,D}\phi- Q^b_{k,D}\phi , 
	\beta(\nabla u - \mathbb{Q}_{k-1,D} \nabla u)\cdot\vn  
	\rangle_{\partial D}
	\apprle
	h^{k+1}\|u\|_{k+1,\Omega}\|\phi\|_{2,\Omega}
	$$
	and
	$$
	|s_h(Q_hu,Q_h \phi)| +|s_h(e_h,Q_h \phi)|\apprle h^{k+1}\|u\|_{{k+1},\Omega}\|\phi\|_{2,\Omega}.
	$$	
	Then by Lemma \ref{l4}, Lemma \ref{ler}, Lemma \ref{leq} 
	\begin{eqnarray*}
		|\langle  
		Q^b_{k,D}u -u, 
		\beta(\nabla_{w} Q_h \phi -\nabla \phi)\cdot\vn
		\rangle_{\partial D}|
		&\apprle&
		h_D^{-\frac12}\|Q^b_{k,D}u -u\|_{L^2(\partial D)}\
		\beta h_D^{\frac12}\|\nabla_{w} Q_h \phi -\nabla \phi\|_{L^2(\partial D)},
	\end{eqnarray*}
	where
	\begin{eqnarray*}
		h_D^{\frac12}\|\nabla_{w} Q_h \phi -\nabla \phi\|_{L^2(\partial D)}
		&\apprle&
		h_D^{\frac12}\|\nabla_{w} Q_h \phi -\mathbb{Q}_{k-1,D}\nabla\phi\|_{L^2(\partial D)}
		+
		h_D^{\frac12}\|\mathbb{Q}_{k-1,D}\nabla\phi-\nabla \phi\|_{L^2(\partial D)}\\
		&\apprle&
		\|\nabla_{w} Q_h \phi -\mathbb{Q}_{k-1,D}\nabla\phi\|_{L^2(D)}
		+
		h_D|\nabla_{w} Q_h \phi -\mathbb{Q}_{k-1,D}\nabla\phi|_{H^1(D)}\\
		&&+
		h_D^{\frac12}\|\mathbb{Q}_{k-1,D}\nabla\phi-\nabla \phi\|_{L^2(\partial D)}\\
		&\apprle&
		h_D \|\phi\|_{H^2(D)}
		+
		\|\nabla_{w} Q_h \phi -\mathbb{Q}_{k-1,D}\nabla\phi\|_{L^2(D)} \\
		&&+
		h_D^{\frac12}\|\mathbb{Q}_{k-1,D}\nabla\phi-\nabla \phi\|_{L^2(\partial D)}\\
		&\apprle&
		h_D \|\phi\|_{H^2(D)}
	\end{eqnarray*}
	so that 
		$		|\langle  
		Q^b_{k,D}u -u, 
		\beta(\nabla_{w} Q_h \phi -\nabla \phi)\cdot\vn
		\rangle_{\partial D}|
		\apprle
		h_D^{k+1} \|u\|_{H^{k+1}(D)}\|\phi\|_{H^2(D)}.
		$	
		\\
		Then we have
		\begin{eqnarray}\label{term1i}
			|(\beta\nabla_{w} Q_h \phi,\nabla_{w} e_h)_\Omega |
			\apprle
			h^{k+1} \|u\|_{{k+1},\Omega}\|\phi\|_{2,\Omega}.
		\end{eqnarray}
		By \eqref{error1i}, \eqref{l2e3i}, \eqref{term2i}, \eqref{term3i} and \eqref{term1i}, we have
		\begin{eqnarray*}
			\|Q_k^0u - u_0\|_{L^2(\Omega)}
			\apprle h^{k+1}\|u\|_{{k+1},\Omega},
		\end{eqnarray*}
		with 
		$$
		\|Q_k^0u - u\|_{L^2(\Omega)}
		\apprle h^{k+1}\|u\|_{{k+1},\Omega},
		$$
		the error estimate \eqref{l2ei} is obtained.
	\end{proof}

	\section{Numerical Experiments}\label{tests}
	In this section, we focus on 2D problems with curved boundaries or interfaces. The main difference compared with the classical weak Galerkin method is the construction of basis functions on curved sides. For the following tests, each element in the mesh has four sides, and one or more sides are curves. We choose two types of curved sides, the special Lipschitz continuous ones formed by connecting points with short lines, and the smooth ones. Basis functions are linearly independent traces of $\mathbb{P}_k(\mathbb{R}^2)$ on the curved side, for example, it's easy to prove if the side is not straight, $P_1$ basis functions are always {  chosen}: $1, x, y$. Let $x^0=y^0=1$, and the set of traces for $\mathbb{P}_k(\mathbb{R}^2), k\geq 2$ on the side be 
	$$T(\mathbb{P}_k):=\{x^iy^j| 0\leq i,j {\rm \ and \ } i+j<=k\}.$$ 
	Instead of determining if the traces are linearly independent analytically, in practice, we can let $(x_m,y_m), 1\leq m\leq  M$ be sample points on the curved side, the number of those points should be sufficiently large.  For each $v(x,y) \in T(\mathbb{P}_k)$, we can get a vector 
	$$(v(x_1,y_1), v(x_2,y_2),\cdots, v(x_M,y_M)),$$ then collect all those vectors and choose the linearly independent ones, the corresponding traces are basis functions on the curved side. Numerical integration on the curved side or curvilinear element is simple, we just need to make sure it is accurate enough to calculate convergence rates. The algorithm in 2D can be easily generalized to 3D problems.
	\subsection{Poisson's Equation}
	This subsection is to verify that elements with Lipschitz continuous or smooth curved edges can also guarantee optimal convergence rates for solving Poisson's equation as long as they satisfy the shape-regular assumptions.
	Let 
	$$
	u(x,y) = \sin(\pi x)\sin(\pi y)
	$$
	be the exact solution to Poisson's equation on a given domain with Dirichlet/Neumann Boundary conditions, we use $P_1$ and $P_2$ elements in Example \ref{ex:SE} to Example \ref{ex:DNB-C}.
	\begin{ex}\label{ex:SE}
		Let the domain be $[0,1]\times [0,1]$, see Figure \ref{fig:SE}. We consider Poisson's equation with the Dirichlet boundary condition (on the blue lines). In the mesh, each element has a  Lipschitz continuous curved side which contains three short lines, basis functions are defined on the whole green-colored curved side, there are three for $P_1$ element and six for $P_2$ element. From Table \ref{tab:SE}, we can see the convergence rates for $L^2$ and $H^1$ errors are optimal.
	\end{ex}
	\begin{figure}[ht!]
		\centering
		\includegraphics[width=0.26\textwidth]{./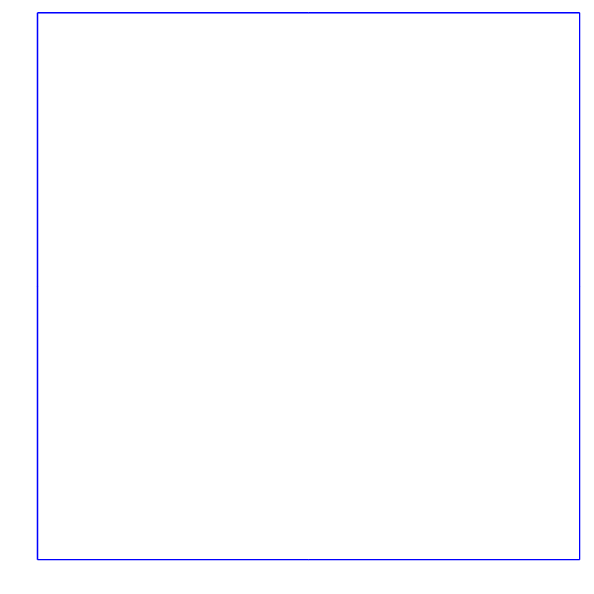}
		\includegraphics[width=0.26\textwidth]{./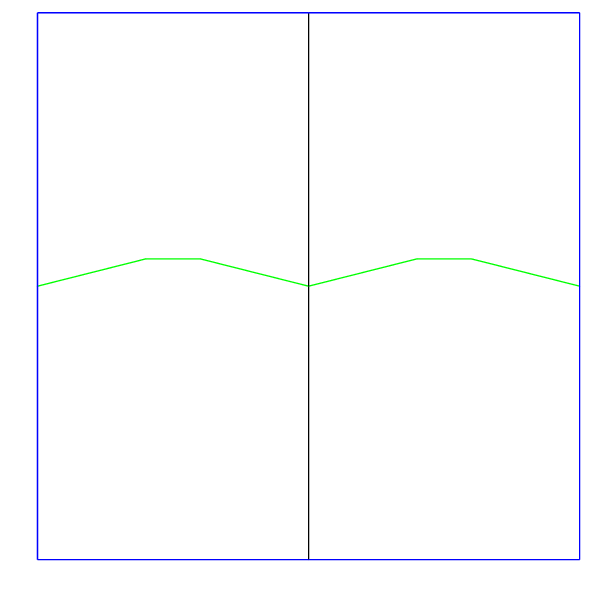}
		\includegraphics[width=0.26\textwidth]{./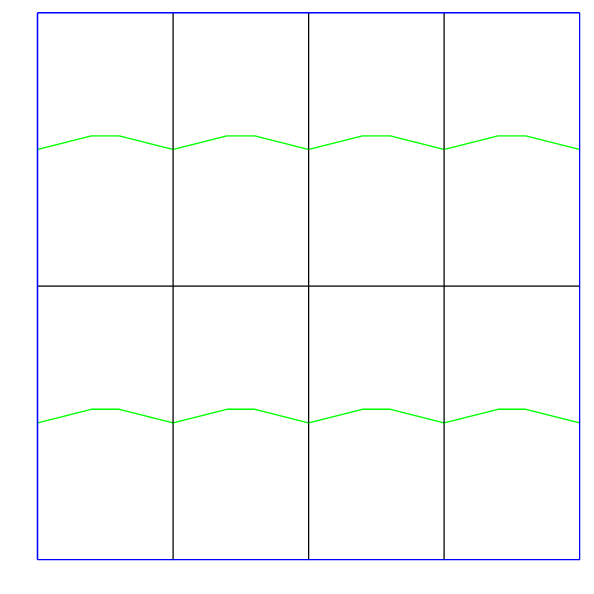}
		\caption{Example \ref{ex:SE}. Left: domain; Middle: initial mesh; Right: refined mesh}
		\label{fig:SE}
	\end{figure}
	\begin {table}[ht!]
	\caption {Example \ref{ex:SE}. Convergence Rates for $P_1$ and $P_2$ Elements, ${\rm d} \approx \sqrt{2}$}  
	\begin{center}
		\begin{tabular}{|c|c|c|c|c|c|c|c|c|} 
			\hline	
			h$\approx$ & $L^2$ error & $P_1$ & $H^1$ error & $P_1$ & $L^2$ error & $P_2$ & $H^1$ error & $P_2$ \\ \hline 
			{\rm d}/2 & 4.826e-01 & Rate & 1.758e+00 & Rate & 1.516e-01 & Rate & 1.256e+00 & Rate \\ \hline 
			{\rm d}/4 & 1.396e-01 & 1.79 & 7.943e-01 & 1.15 & 1.825e-02 & 3.05 & 3.017e-01 & 2.06 \\ \hline 
			{\rm d}/8 & 3.630e-02 & 1.94 & 3.711e-01 & 1.10 & 2.112e-03 & 3.11 & 7.155e-02 & 2.08 \\ \hline 
			{\rm d}/16 & 9.167e-03 & 1.99 & 1.815e-01 & 1.03 & 2.558e-04 & 3.05 & 1.754e-02 & 2.03 \\ \hline 
			{\rm d}/32 & 2.298e-03 & 2.00 & 9.019e-02 & 1.01 & 3.169e-05 & 3.01 & 4.360e-03 & 2.01 \\ \hline 
			{\rm d}/64 & 5.748e-04 & 2.00 & 4.503e-02 & 1.00 & 3.952e-06 & 3.00 & 1.089e-03 & 2.00 \\ \hline 
		\end{tabular}
  \label{tab:SE}
	\end{center}
\end{table}	

{ 
	\begin{ex}\label{ex:SE-n1}
		Following Example \ref{ex:SE}, we define a parameter $M$ which makes the curved side more flat if it is increasing, see Figure \ref{fig:SE} (the middle one with $M=10$) and Figure \ref{fig:SE-n1}. For the same Poisson's equation and $P_1$ element, we test the condition numbers of the stiff matrices on coarse meshes and obtain the convergence rates with fixed $M$ and refined meshes. We observe that the condition number of the stiff matrix is increasing as $M$ increases, see Table \ref{tabSE-c1}.
  However, from Table \ref{tabSE-n1}, we can see that the convergence rates for $L^2$ and $H^1$ errors are still optimal (even slightly better compared with Table \ref{tab:SE} with $P_1$ element). So large condition numbers do not necessarily reduce the accuracy.  
	\end{ex}
 }
	\begin{figure}[ht!]
		\centering
		\includegraphics[width=0.4\textwidth]{./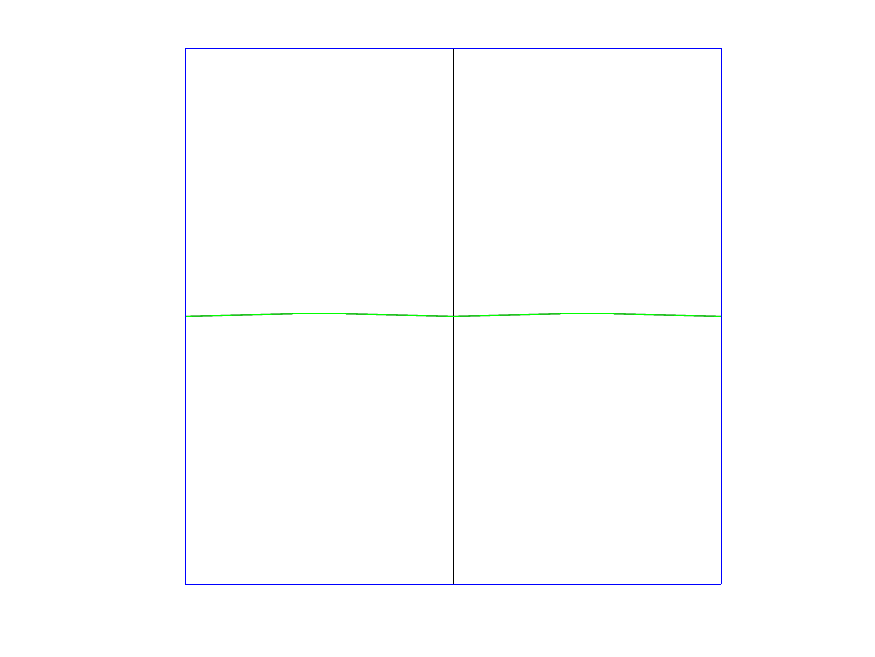}
		\includegraphics[width=0.4\textwidth]{./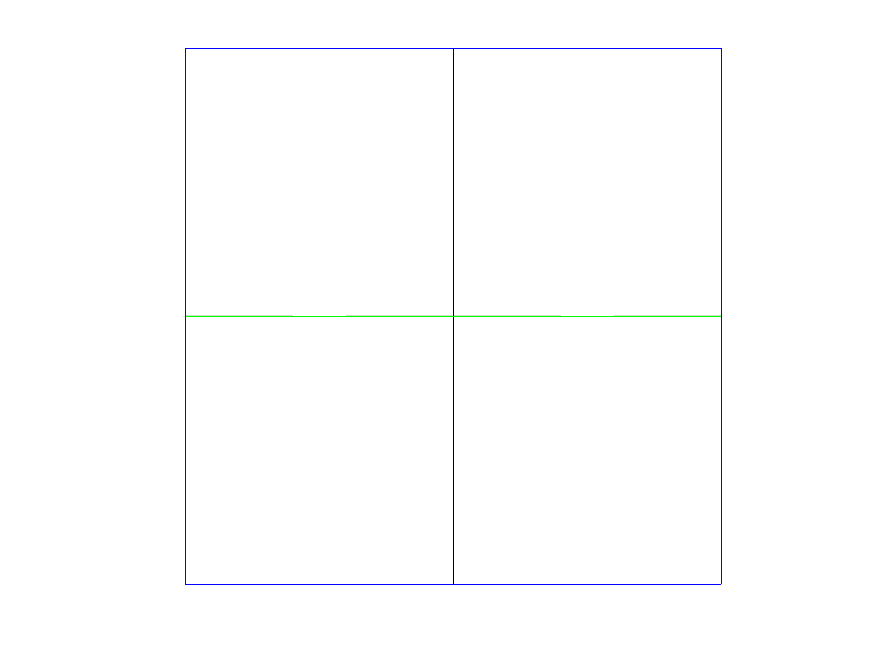}
		\caption{Example \ref{ex:SE-n1}. Meshes withes 4 elements, left:  $M=10^2$;  right: $M=10^4$}
		\label{fig:SE-n1}
	\end{figure}
 
\begin {table}[ht!]
\caption {Example \ref{ex:SE-n1}. Condition Numbers for stiff matrices on coarse meshes with $P_1$ basis functions }  
\begin{center}
\begin{tabular}{|c|c|c|c|c|c|} 
\hline
h$\approx$ & $M=5$ & $M=10$ & $M=10^2$ & $M=10^3$ & $M=10^4$  \\ \hline 
$\sqrt{2}/2$ & 2.5861e+03 & 9.9975e+03 & 9.8530e+05 & 9.8446e+07 & 9.8438e+09  \\ \hline 
$\sqrt{2}/4$ & 1.0442e+04 & 3.7529e+04 & 3.6133e+06 & 3.6097e+08 & 3.6094e+10 \\ \hline 
$\sqrt{2}/8$ & 1.3266e+05 & 1.4676e+05 & 1.3561e+07 & 1.3547e+09 & 1.3547e+11 \\ \hline 
\end{tabular}
\label{tabSE-c1}
\end{center}
\end{table}	

\begin {table}[ht!]
\caption {Example \ref{ex:SE-n1}. Errors for $P_1$ Element with $M=10^2$ and $M=10^4$, ${\rm d} \approx \sqrt{2}$ }  
\begin{center}
\begin{tabular}{|c|c|c|c|c|c|c|c|c|} 
\hline
h$\approx$ & $L^2$ error & $10^2$ & $H^1$ error & $10^2$ & $L^2$ error & $10^4$ & $H^1$ error & $10^4$ \\ \hline 
{\rm d}/2 & 4.785e-01 & Rate & 1.760e+00 & Rate & 4.785e-01 & Rate & 1.760e+00 & Rate \\ \hline 
{\rm d}/4 & 1.378e-01 & 1.80 & 7.868e-01 & 1.16 & 1.378e-01 & 1.80 & 7.867e-01 & 1.16 \\ \hline 
{\rm d}/8 & 3.572e-02 & 1.95 & 3.674e-01 & 1.10 & 3.572e-02 & 1.95 & 3.673e-01 & 1.10 \\ \hline 
{\rm d}/16 & 9.012e-03 & 1.99 & 1.796e-01 & 1.03 & 9.010e-03 & 1.99 & 1.795e-01 & 1.03 \\ \hline 
{\rm d}/32 & 2.258e-03 & 2.00 & 8.923e-02 & 1.01 & 2.258e-03 & 2.00 & 8.922e-02 & 1.01 \\ \hline 
{\rm d}/64 & 5.648e-04 & 2.00 & 4.455e-02 & 1.00 & 5.647e-04 & 2.00 & 4.454e-02 & 1.00 \\ \hline 
\end{tabular}
\label{tabSE-n1}
\end{center}
\end{table}	

\begin{ex}\label{ex:SE-C}
	Let the domain be $[0,1]\times [0,1]$, see Figure \ref{fig:SE-C}. Still, we consider Poisson's equation with the Dirichlet boundary condition (on the blue lines). In the mesh, each element has a smooth curved side which is part of a quadratic function, basis functions are defined on the green-colored smooth curved side, there are three for $P_1$ element, and five for $P_2$ element. From Table \ref{tab:SE-C}, we can see the convergence rates for $L^2$ and $H^1$ errors are optimal.
\end{ex}
\begin{figure}[ht!]
	\centering
	\includegraphics[width=0.26\textwidth]{./fig/SE-DB-1.eps}
	\includegraphics[width=0.26\textwidth]{./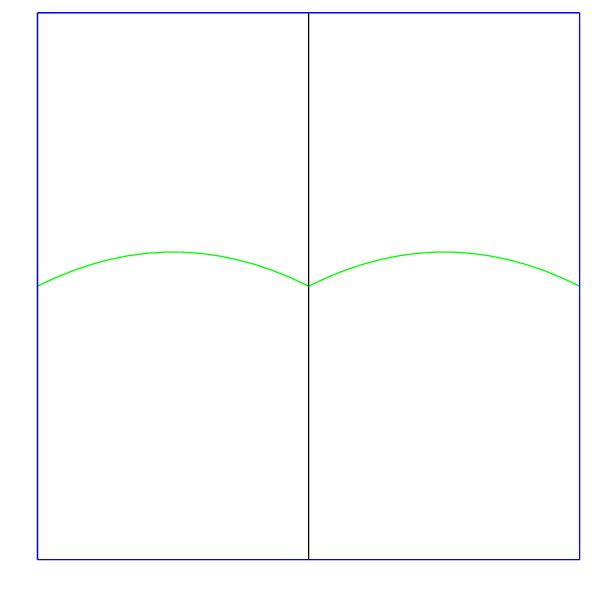}
	\includegraphics[width=0.26\textwidth]{./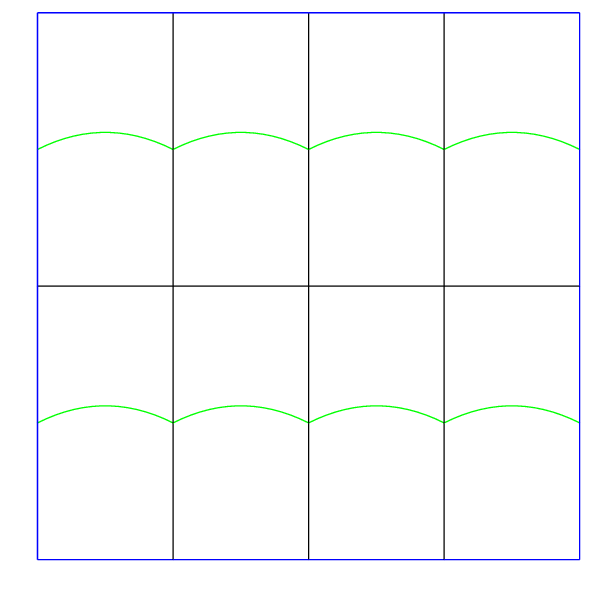}
	\caption{Example \ref{ex:SE-C}. Left: domain; Middle: initial mesh; Right: refined mesh}
	\label{fig:SE-C}
\end{figure}
\begin {table}[ht]
\caption {Example \ref{ex:SE-C}. Convergence Rates for $P_1$ and $P_2$ Elements, ${\rm d} \approx \sqrt{2}$} 
\begin{center}
	\begin{tabular}{|c|c|c|c|c|c|c|c|c|} 
		\hline	
		h$\approx$ & $L^2$ error & $P_1$ & $H^1$ error & $P_1$ & $L^2$ error & $P_2$ & $H^1$ error & $P_2$ \\ \hline 
		{\rm d}/2 & 4.871e-01 & Rate & 1.757 & Rate & 1.526e-01 & Rate & 1.266 & Rate \\ \hline 
		{\rm d}/4 & 1.414e-01 & 1.78 & 8.003e-01 & 1.13 & 1.864e-02 & 3.03 & 3.082e-01 & 2.04 \\ \hline 
		{\rm d}/8 & 3.683e-02 & 1.94 & 3.743e-01 & 1.10 & 2.183e-03 & 3.09 & 7.387e-02 & 2.06 \\ \hline 
		{\rm d}/16 & 9.309e-03 & 1.98 & 1.831e-01 & 1.03 & 2.661e-04 & 3.04 & 1.819e-02 & 2.02 \\ \hline 
		{\rm d}/32 & 2.334e-03 & 2.00 & 9.100e-02 & 1.01 & 3.303e-05 & 3.01 & 4.531e-03 & 2.01 \\ \hline 
		{\rm d}/64 & 5.839e-04 & 2.00 & 4.543e-02 & 1.00 & 4.121e-06 & 3.00 & 1.131e-03 & 2.00 \\ \hline 
	\end{tabular}
 \label{tab:SE-C}
\end{center}
\end{table}	
\begin{ex}\label{ex:DNB} Domain with Lipschitz continuous boundaries,
let $N=192$ and
$$x_i=r\sin(\theta_i),\ y_i=r\cos(\theta_i);\ \theta_i=\frac{\pi}{N},\ i=0,1,\cdots,N$$
if $r=1$, then connect points $(x_i,y_i)$ and $(x_{i+1},y_{i+1})$ by straight lines, we obtain the curve $C_0$ (blue color); similarly, for $r=1.2$, we have curve $C_1$ (red color), the domain is bounded by $C_0$ and $C_1$, points are marked on curves, see Figure \ref{fig:DNB}. We consider Poisson's equation with the Dirichlet boundary condition on the blue curve and the Neumann Boundary condition on the red curve. In Figure \ref{fig:DNB}, each element has at least one Lipschitz continuous curved side which contains multiple short lines, basis functions are defined on the whole curved side. From Table \ref{tab:DNB}, we can see the convergence rates for $L^2$ and $H^1$ errors are optimal.
\end{ex}
\begin{figure}[ht!]
\centering
\includegraphics[width=0.2\textwidth]{./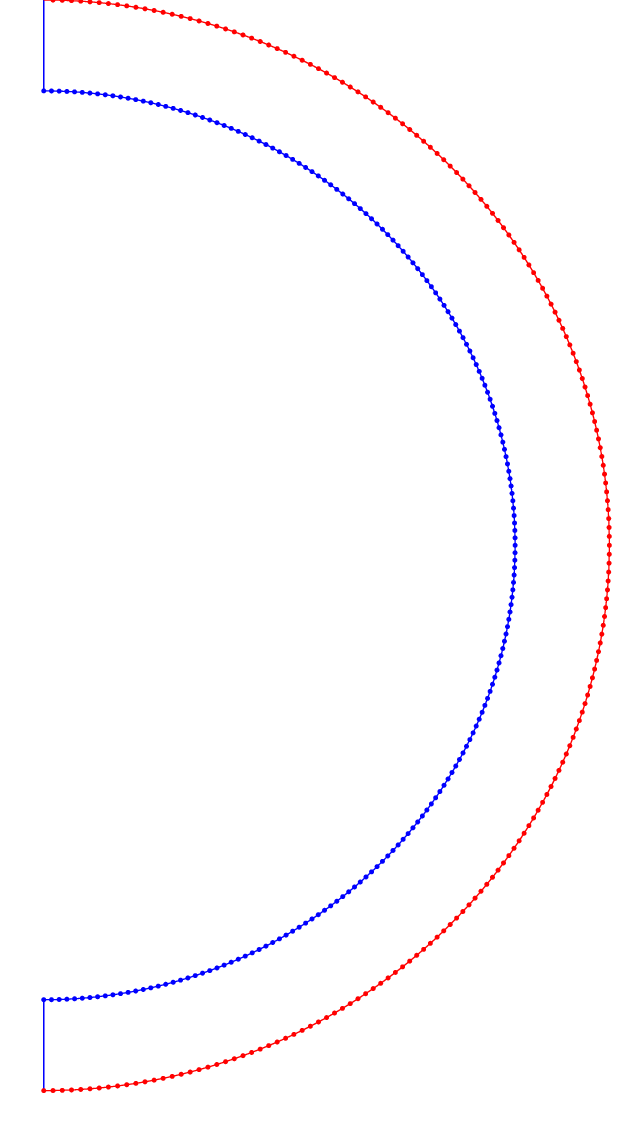}
\hspace{1cm}
\includegraphics[width=0.2\textwidth]{./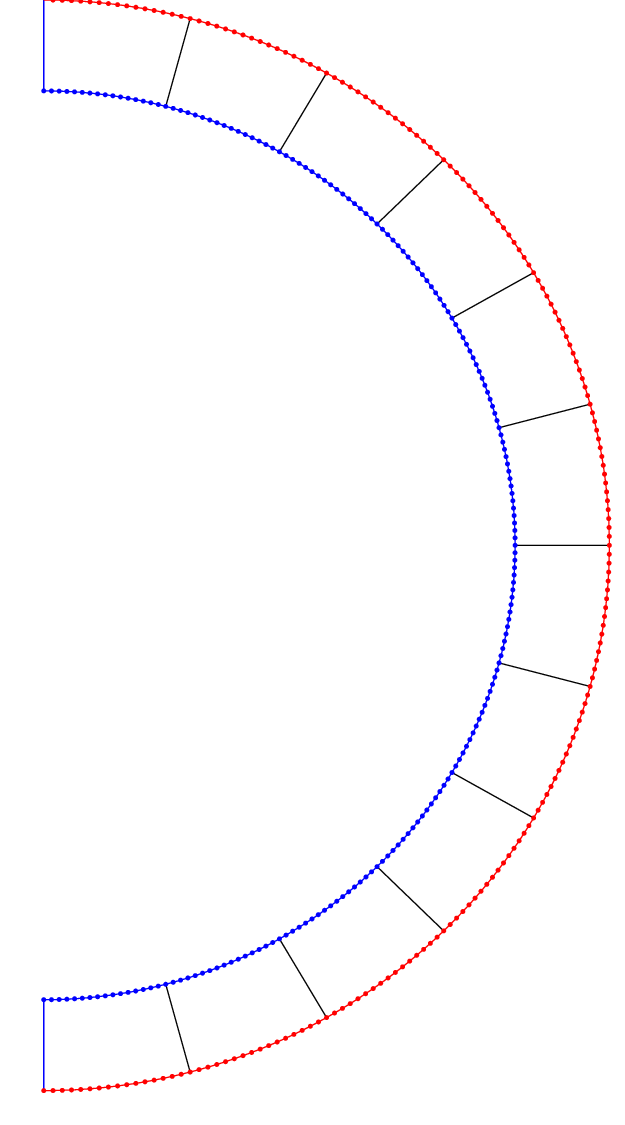}
\hspace{1cm}
\includegraphics[width=0.2\textwidth]{./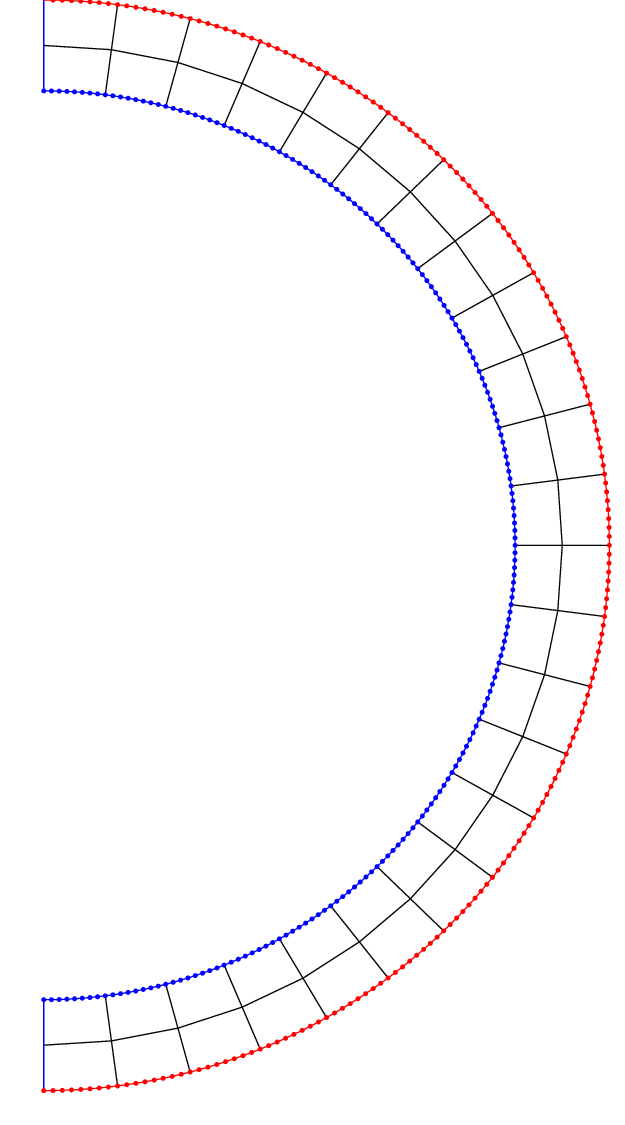}
\caption{Example \ref{ex:DNB}. Left: domain; Middle: initial mesh; Right: refined mesh}
\label{fig:DNB}
\end{figure}
\begin {table}[ht!]
\caption {Example \ref{ex:DNB}. Convergence Rates for $P_1$ and $P_2$ Elements, ${\rm d} \approx 0.70$}  
\begin{center}
\begin{tabular}{|c|c|c|c|c|c|c|c|c|} 
	\hline	
	h$\approx$ & $L^2$ error & $P_1$ & $H^1$ error & $P_1$ & $L^2$ error & $P_2$ & $H^1$ error & $P_2$ \\ \hline 
	{\rm d}/2 & 5.407e-02 & Rate & 7.279e-01 & Rate & 2.263e-02 & Rate & 4.034e-01 & Rate \\ \hline 
	{\rm d}/4 & 1.649e-02 & 1.71 & 3.172e-01 & 1.20 & 3.168e-03 & 2.84 & 1.044e-01 & 1.95 \\ \hline 
	{\rm d}/8 & 4.213e-03 & 1.97 & 1.465e-01 & 1.11 & 4.368e-04 & 2.86 & 2.740e-02 & 1.93 \\ \hline 
	{\rm d}/16 & 1.051e-03 & 2.00 & 7.038e-02 & 1.06 & 5.737e-05 & 2.93 & 7.049e-03 & 1.96 \\ \hline 
	{\rm d}/32 & 2.624e-04 & 2.00 & 3.468e-02 & 1.02 & 7.302e-06 & 2.97 & 1.783e-03 & 1.98 \\ \hline 
\end{tabular}
\label{tab:DNB}
\end{center}
\end{table}	
\begin{ex}\label{ex:DNB-C}
Domain with smooth curved boundaries:
\begin{align*}
& \Omega:=\{(x,y)|x=r\sin(\theta),y=r\cos(\theta),1\le r \le 1.2,\ 0\leq \theta\leq \pi \}. 
\end{align*}
On this domain, we consider Poisson’s equation with the Dirichlet boundary condition
on the blue curve and the Neumann Boundary condition on the red curve, see Figure \ref{fig:DNB-C}. In Figure \ref{fig:DNB-C}, each
element has at least one smooth curved side, basis functions are defined on the curved side. From Table \ref{tab:DNB-C}, we can see the convergence
rates for $L^2$ and $H^1$ errors are optimal.

\end{ex}
\begin{figure}[ht!]
\centering
\includegraphics[width=0.2\textwidth]{./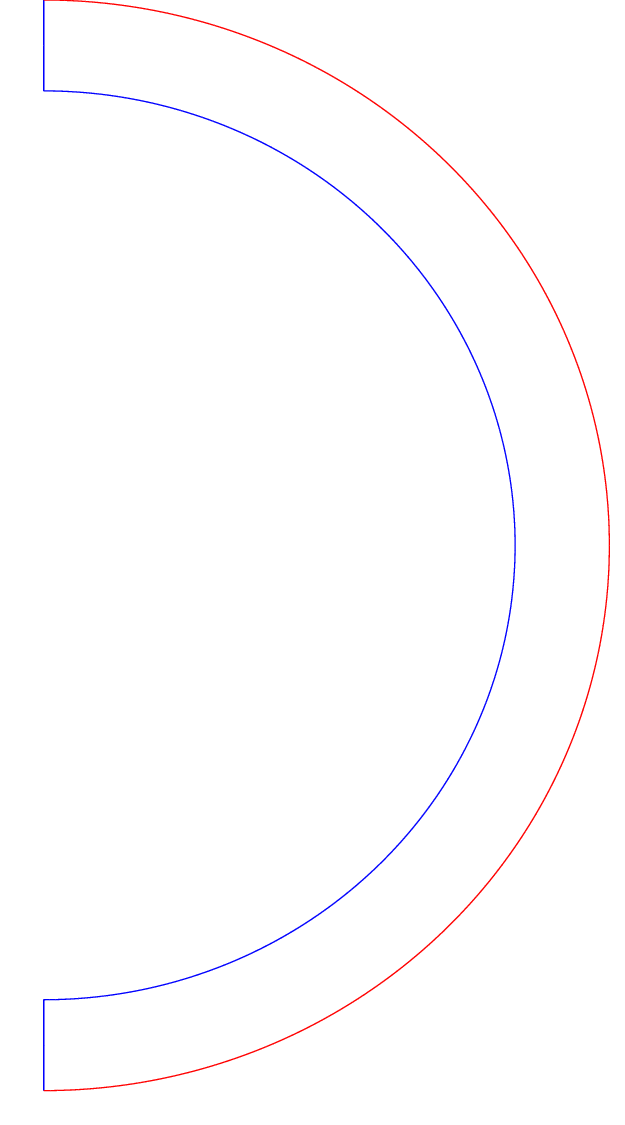}
\hspace{1cm}
\includegraphics[width=0.2\textwidth]{./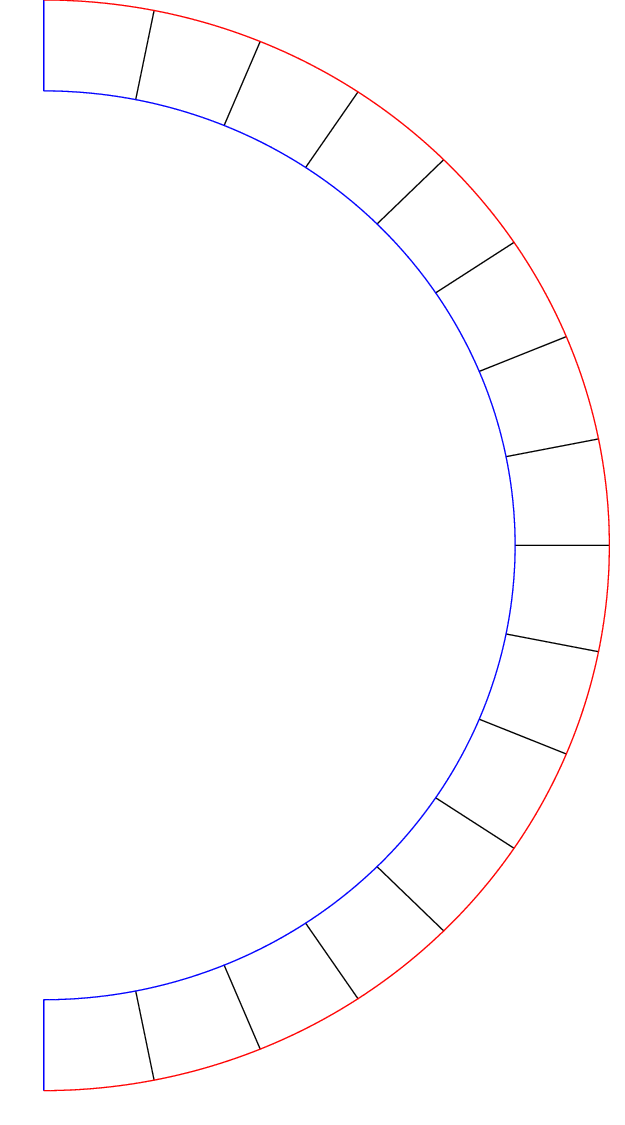}
\hspace{1cm}
\includegraphics[width=0.2\textwidth]{./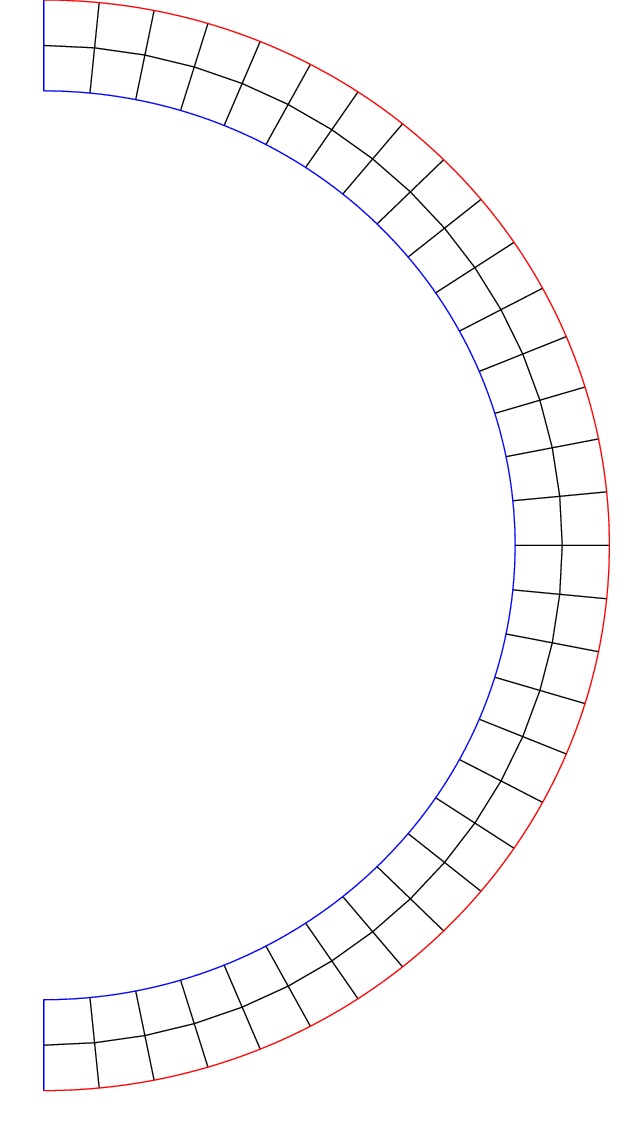}
\caption{Example \ref{ex:DNB-C}. Left: domain; Middle: initial mesh; Right: refined mesh}
\label{fig:DNB-C}
\end{figure}
\begin {table}[ht!]
\caption {Example \ref{ex:DNB-C}. Convergence Rates for $P_1$ and $P_2$ Elements, ${\rm d} \approx 0.58$}  
\begin{center}
\begin{tabular}{|c|c|c|c|c|c|c|c|c|} 
\hline
h$\approx$ & $L^2$ error & $P_1$ & $H^1$ error & $P_1$ & $L^2$ error & $P_2$ & $H^1$ error & $P_2$ \\ \hline 
{\rm d}/2 & 3.548e-02 & Rate & 5.707e-01 & Rate & 1.582e-02 & Rate & 3.052e-01 & Rate \\ \hline 
{\rm d}/4 & 1.158e-02 & 1.62 & 2.507e-01 & 1.19 & 2.086e-03 & 2.92 & 7.838e-02 & 1.96 \\ \hline 
{\rm d}/8 & 3.029e-03 & 1.93 & 1.167e-01 & 1.10 & 2.722e-04 & 2.94 & 2.020e-02 & 1.96 \\ \hline 
{\rm d}/16 & 7.645e-04 & 1.99 & 5.656e-02 & 1.04 & 3.474e-05 & 2.97 & 5.130e-03 & 1.98 \\ \hline 
{\rm d}/32 & 1.915e-04 & 2.00 & 2.800e-02 & 1.01 & 4.377e-06 & 2.99 & 1.291e-03 & 1.99 \\ \hline 
\end{tabular}
\label{tab:DNB-C}
\end{center}
\end{table}

\subsection{Elliptic Interface Problem}
Let $\beta_1=\beta_2=1$ and the exact solution be
$$
u = 
\left\{
\begin{aligned} 
&\sin(\pi x)\sin(\pi y) - 1, & {\rm \ on\ } \Omega_1 \\ 
& \sin(\pi x)\sin(\pi y),& {\rm \ on\ } \Omega_2
\end{aligned}
\right.
$$
for \eqref{interface-eq} with non-homogeneous jump condition.  Suppose $r_0=1, r_1 =1.12,  r_2 = 1.24, r_3=1.36$, then we use $P_1$, $P_2$ elements in examples \ref{ex:CIB} and \ref{ex:CIB-C}. Except for the new basis functions, the employed numerical scheme is the same as  in \cite{Lin16}.
\begin{ex}\label{ex:CIB} Domain with Lipschitz continuous boundaries/interfaces,
let $N=192$ and
$$x_i=r\sin(\theta_i),\ y_i=r\cos(\theta_i);\ \theta_i=\frac{\pi}{N},\ i=0,1,\cdots,N$$
if $r=r_0$, then connect points $(x_i,y_i)$ and $(x_{i+1},y_{i+1})$ by straight lines, we obtain the curve $C_0$; similarly, for $r=r_1, r_2, r_3$, we have curves $C_1,C_2,C_3$. The points are marked on curves in Figure \ref{fig:CIB}. $\Omega_1$ is the region between $C_1$ and $C_2$, the two red cures in the left domain of Figure \ref{fig:CIB}; $\Omega_2$ is the whole region with boundaries $C_0$ and $C_3$, excluding $\Omega_1$. We consider the elliptic interface problem with the Dirichlet boundary condition on the blue curve. In Figure \ref{fig:CIB}, each element has at least one Lipschitz continuous curved side which contains multiple short lines. Basis functions are defined on the whole curved side. From Table \ref{tab:CIB}, we can see the convergence rates for $L^2$ and $H^1$ errors are optimal.
\end{ex}
\begin{figure}[ht!]
\centering
\includegraphics[width=0.2\textwidth]{./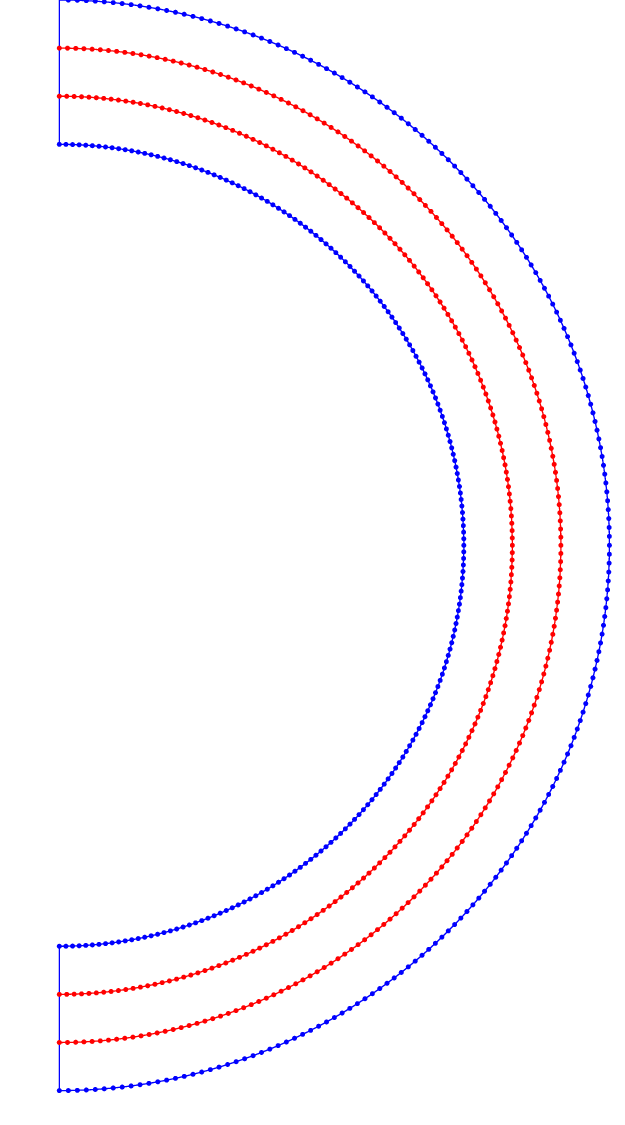}
\hspace{1cm}
\includegraphics[width=0.2\textwidth]{./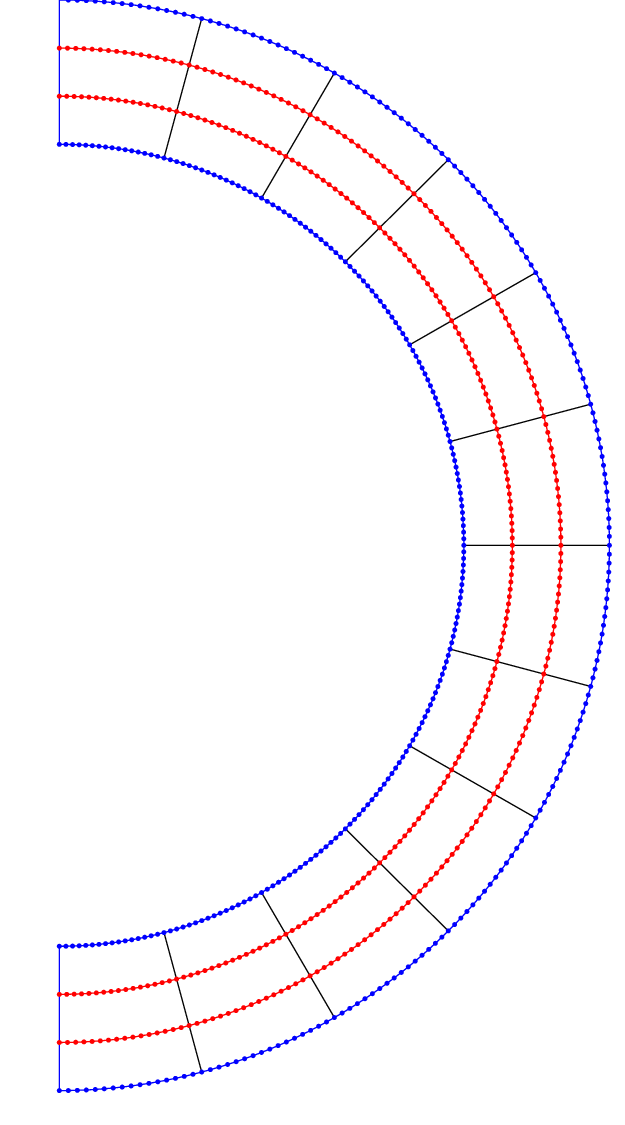}
\hspace{1cm}
\includegraphics[width=0.2\textwidth]{./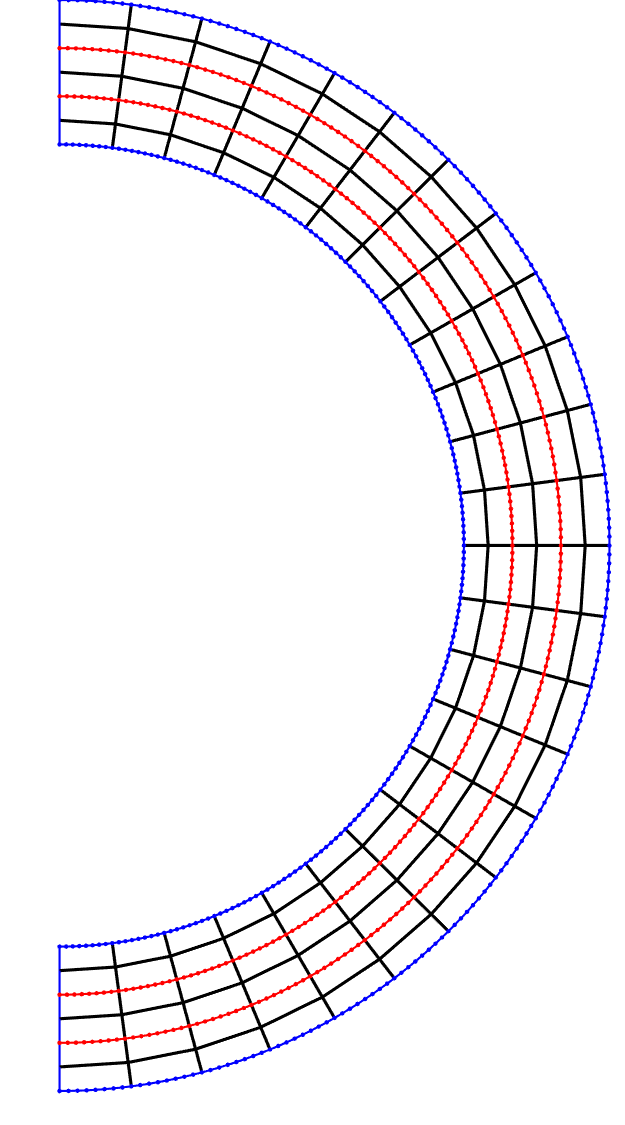}
\caption{Example \ref{ex:CIB}. Left: domain; Middle: initial mesh; Right: refined mesh}
\label{fig:CIB}
\end{figure}
\begin {table}[ht!]
\caption {Example \ref{ex:CIB}. Convergence Rates for $P_1$ and $P_2$ Elements, ${\rm d}\approx 0.72$}  
\begin{center}
\begin{tabular}{|c|c|c|c|c|c|c|c|c|} 
\hline	
h$\approx$ & $L^2$ error & $P_1$ & $H^1$ error & $P_1$ & $L^2$ error & $P_2$ & $H^1$ error & $P_2$ \\ \hline 
{\rm d}/2 & 1.055e-01 & Rate & 8.156e-01 & Rate & 2.488e-02 & Rate & 4.146e-01 & Rate \\ \hline 
{\rm d}/4 & 2.739e-02 & 1.95 & 4.027e-01 & 1.02 & 3.845e-03 & 2.69 & 1.141e-01 & 1.86 \\ \hline 
{\rm d}/8 & 6.755e-03 & 2.02 & 1.955e-01 & 1.04 & 5.574e-04 & 2.79 & 3.108e-02 & 1.88 \\ \hline 
{\rm d}/16 & 1.675e-03 & 2.01 & 9.587e-02 & 1.03 & 7.509e-05 & 2.89 & 8.138e-03 & 1.93 \\ \hline 
{\rm d}/32 & 4.177e-04 & 2.00 & 4.763e-02 & 1.01 & 9.727e-06 & 2.95 & 2.080e-03 & 1.97 \\ \hline 
\end{tabular}
\label{tab:CIB}
\end{center}
\end{table}		
\begin{ex}\label{ex:CIB-C}
Domain with smooth curved boundaries and interfaces:
\begin{align*}
& \Omega_1:=\{(x,y)|x=r\sin(\theta),y=r\cos(\theta),r_1\le r \le r_2,\ 0\leq \theta\leq \pi \} \\
&\Omega_2:=\{(x,y)|x=r\sin(\theta),y=r\cos(\theta),r\in[r_0,r_1]\cup [r_2,r_3],\ 0\leq \theta\leq \pi \}.
\end{align*}
We consider the elliptic interface problem with the Dirichlet boundary condition
on the blue curve, see Figure \ref{fig:CIB-C}. In Figure \ref{fig:CIB-C}, each
element has at least one smooth curved side.
Basis functions are defined on the curved side. From Table \ref{tab:CIB-C}, we can see the convergence
rates for $L^2$ and $H^1$ errors are optimal.
\end{ex}
\begin{figure}[ht!]
\centering
\includegraphics[width=0.2\textwidth]{./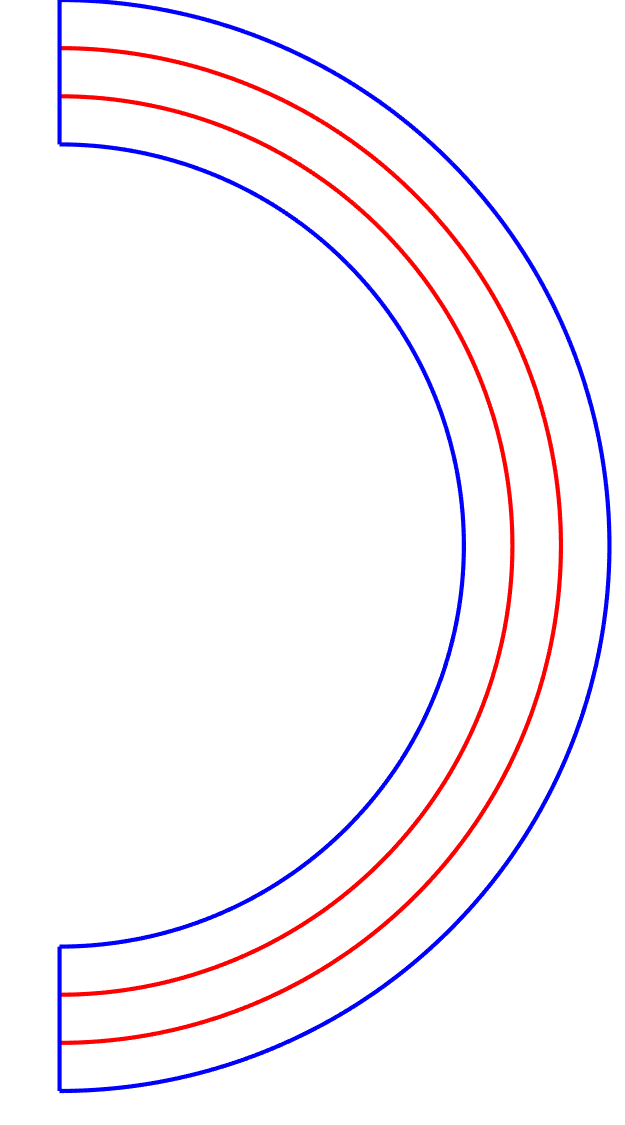}
\hspace{1cm}
\includegraphics[width=0.2\textwidth]{./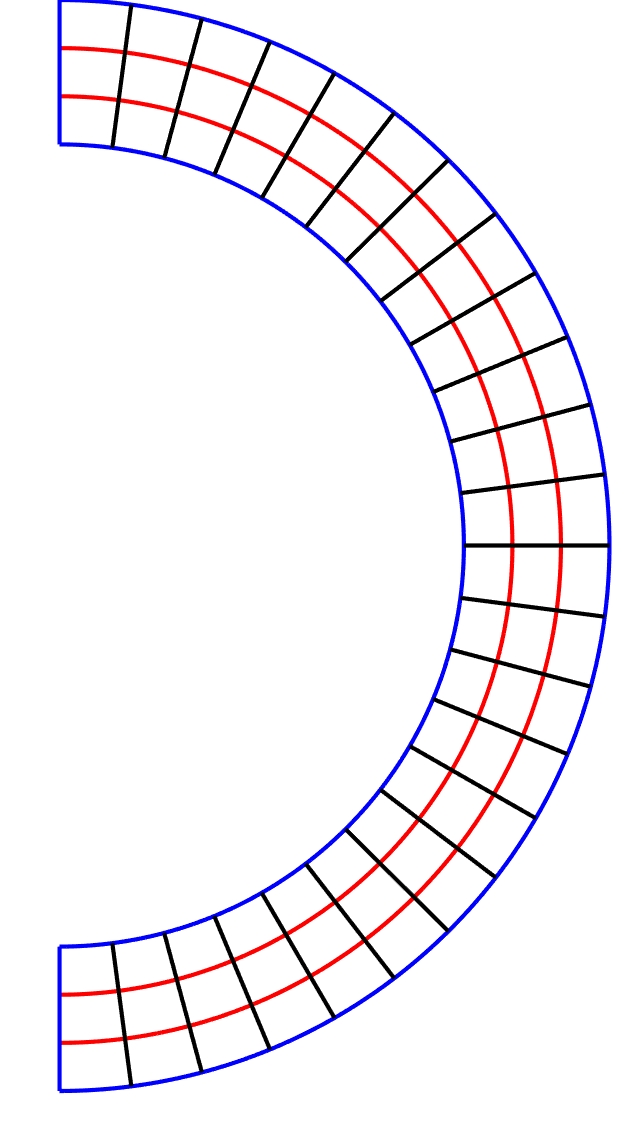}
\hspace{1cm}
\includegraphics[width=0.2\textwidth]{./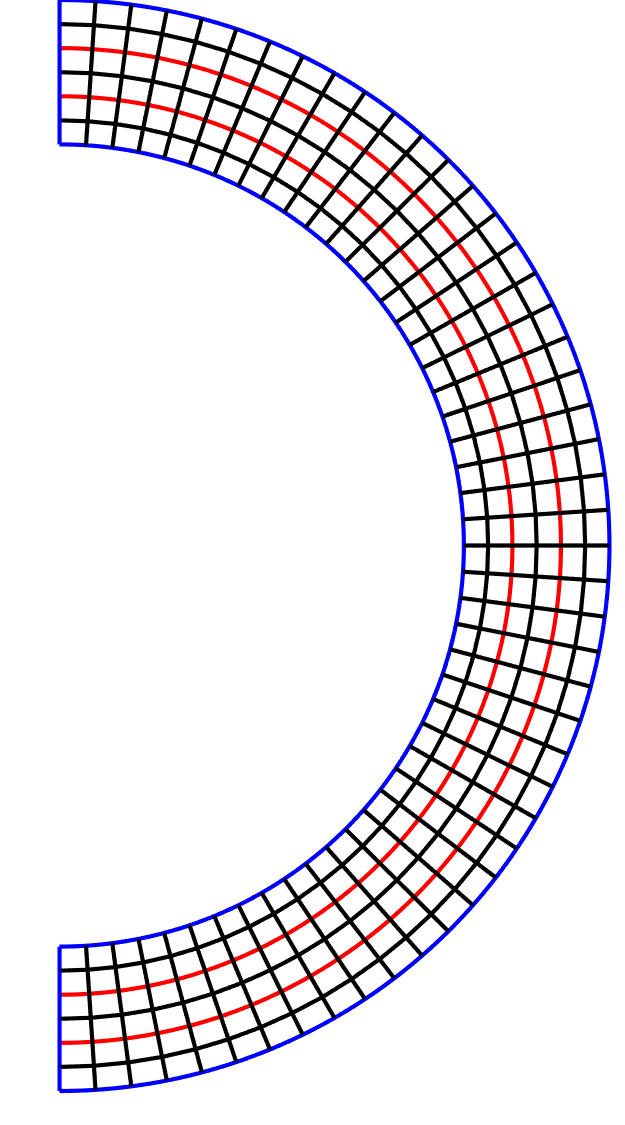}
\caption{Example \ref{ex:CIB-C}. Left: domain; Middle: initial mesh; Right: refined mesh}
\label{fig:CIB-C}
\end{figure}
\begin {table}[ht!]
\caption {Example \ref{ex:CIB-C}. Convergence Rates for $P_1$ and $P_2$ Elements, ${\rm d}\approx 0.42$}  
\begin{center}
\begin{tabular}{|c|c|c|c|c|c|c|c|c|} 
\hline	
h$\approx$ & $L^2$ error & $P_1$ & $H^1$ error & $P_1$ & $L^2$ error & $P_2$ & $H^1$ error & $P_2$ \\ \hline 
{\rm d}/2 & 5.072e-02 & Rate & 4.716e-01 & Rate & 6.251e-03 & Rate & 1.773e-01 & Rate \\ \hline 
{\rm d}/4 & 1.297e-02 & 1.97 & 2.251e-01 & 1.07 & 8.378e-04 & 2.90 & 4.620e-02 & 1.94 \\ \hline 
{\rm d}/8 & 3.248e-03 & 2.00 & 1.094e-01 & 1.04 & 1.093e-04 & 2.94 & 1.187e-02 & 1.96 \\ \hline 
{\rm d}/16 & 8.119e-04 & 2.00 & 5.416e-02 & 1.01 & 1.395e-05 & 2.97 & 3.008e-03 & 1.98 \\ \hline 
{\rm d}/32 & 2.030e-04 & 2.00 & 2.700e-02 & 1.00 & 1.761e-06 & 2.99 & 7.567e-04 & 1.99 \\ \hline 
\end{tabular}
\label{tab:CIB-C}
\end{center}
\end{table}	

\section{Conclusions}\label{con}
This paper tries to answer the question of solving second-order PDEs on domains with curved Lipschitz continuous boundaries or interfaces by high-order methods if the exact solutions are smooth enough. Our method has arbitrary high order, doesn't introduce geometrical errors, and guarantees optimal convergence rates. Also, basis functions are easy to construct, don't depend on the smoothness of sides/faces, and are consistent in 2D and 3D. Though not given here, the weak Galerkin schemes, for Poisson's equation with Neumann boundary condition or the elliptic interface problem with non-homogeneous jump condition (same as in \cite{Lin16}) also have optimal convergence rates and can be proved similarly. There is still room to improve the current method by reducing the unknowns on the sides/faces as in \cite{mu2015weak}, and it provides a way to deal with small edges/faces in the mesh by combining them and treat the connected lines/faces as a whole side/face. Our next steps are to extend the work to PDEs with nonlinear boundary conditions on curved boundaries or interfaces, which are common in biological models, see \cite{guan2022modeling}, and design similar basis functions on curved sides/faces for fourth-order problems.

\section*{Acknowledgments}
The second author is funded by Grant R01EB034143. The third author has received support from the National Natural Science Foundation of China (Grant Number: 12001325).
\bibliography{mybibfile}

\end{document}